\let\SAVEDRightarrow=\Rightarrow
\let\Rightarrow=\SAVEDRightarrow
\theoremstyle{plain}
\newtheorem{satz}{Satz}[section]
\newtheorem{lemma}[satz]{Lemma}
\newtheorem{prop}[satz]{Proposition}
\newtheorem{thm}[satz]{Theorem}
\theoremstyle{definition}
\newtheorem{defi}[satz]{Definition}
\newtheorem{rmrk}[satz]{Remark}
\newtheorem{example}[satz]{Example}
\theoremstyle{remark}
\DeclareMathOperator{\Cone}{Cone}
\DeclareMathOperator{\adn}{adn}
\renewcommand{\phi}{\varphi}
\renewcommand{\theta}{\vartheta}
\renewcommand{\epsilon}{\varepsilon}
\renewcommand{\rho}{\varrho}
\newcommand{\IN}{\mathbbm{N}}
\newcommand{\IZ}{\mathbbm{Z}}
\newcommand{\IR}{\mathbbm{R}}
\newcommand{\bullseyefigure}{\begin{center}
\psset{labelsep=2pt}
\begin{pspicture}(-5,-5)(5,5)
\psdot(0,0)
\pscircle(0,0){4}
\pscircle(0,0){2}
\pscircle(0,0){1}
\pscircle(0,0){0.5}
\pscircle(0,0){0.25}
\pscircle(0,0){0.125}
\psline(-5,0)(5,0)
\psline(0,0.125)(0,0.25)
\psline(0,0.5)(0,1)
\psline(0,1)(0,2)
\psline(0,4)(0,5)
\psdot[dotstyle=triangle*, dotscale=0.5](-0.375,0)
\psdot[dotstyle=triangle*, dotscale=1](-0.75,0)
\psdot[dotstyle=triangle*, dotscale=2](-1.5,0)
\psdot[dotstyle=triangle*, dotscale=4](-3,0)
\psdot[dotstyle=square*, dotscale=0.5](0.375,0)
\psdot[dotstyle=square*, dotscale=1](0.75,0)
\psdot[dotstyle=square*, dotscale=2](1.5,0)
\psdot[dotstyle=square*, dotscale=4](3,0)
\uput[r](0,0.72){\scriptsize 1}
\uput[r](0,1.4){\scriptsize 1}
\uput[r](0,2.9){\scriptsize 0}
\uput[r](0,4.6){\scriptsize 1}
\end{pspicture}~\\
{\scriptsize Figure 1: A bullseye space.}
\end{center}}
\numberwithin{equation}{section}
\DeclareFontFamily{U}{schwell}{}
\DeclareFontShape{U}{schwell}{m}{n}{
   <8> <9> <10> <10.95> <12> <14.4> <17.28>  <20.74> <24.88> schwell}{}
\DeclareMathAlphabet{\schwell}{U}{schwell}{m}{n}
\newcommand\textschwell{\usefont{U}{schwell}{m}{n}}
\DeclareTextFontCommand{\schwell}{\textschwell}
\DeclareFontFamily{U}{suet}{}
\DeclareFontShape{U}{suet}{m}{n}{
   <8> <9> <10> <10.95> <12> <14.4> <17.28>  <20.74> <24.88> suet14}{}
\DeclareMathAlphabet{\suet}{U}{suet}{m}{n}
\newcommand\textsuet{\usefont{U}{suet}{m}{n}}
\DeclareTextFontCommand{\suet}{\textsuet}
\DeclareMathAlphabet{\dis}{T1}{cmss}{bx}{sl}
\newfont{\cyrfnt}{wncyr10}
\newfont{\cybfnt}{wncyb10}
\newfont{\cyifnt}{wncyi10}
\newfont{\cyscfnt}{wncysc10}
\newfont{\cyssfnt}{wncyss10}
\author{Lars Scheele}
\author{Alessandro Sisto}
\address{Universit\"at M\"unster, Einsteinstr. 60, 48149 M\"unster, Germany}
\address{Mathematical Institute, 24-29 St Giles, Oxford OX1 3LB, United Kingdom}
\email{lars.scheele@uni-muenster.de, sisto@maths.ox.ac.uk}
\title[Iterated Asymptotic cones]{Iterated asymptotic cones}
\subjclass[2010]{20F65}
\keywords{Asymptotic cones}
\begin{document}

\maketitle
\begin{abstract}
 Iterated asymptotic cones have been used by Dru\c{t}u and Sapir to construct a group with uncountably many pairwise non-homeo-morphic asymptotic cones. In this paper we define a class of metric spaces which display a wide range of behaviors with respect to iterated asymptotic cones, and we use those to construct examples within the class of groups. Namely, we will show that there exists a group whose iterated cones are pairwise non-homeomorphic, or periodically homeomorphic.
\end{abstract}

\section*{Introduction}
Asymptotic cones are useful quasi-isometry invariants of metric spaces, especially groups, each of which encodes some aspects of the large scale geometry of a metric space. Gromov put forward the idea to construct them in \cite{G1}, and this idea has been refined by van den Dries and Wilkie in \cite{VDW}. The asymptotic cones of the metric space $X$ depend on the choice of an ultrafilter, a sequence of points of $X$ (called \emph{base-point}) and a divergent sequence of positive real numbers (called \emph{scaling factor}). The asymptotic cones of a group do not depend on the choice of the base-point and this is why there has been more interest in the dependence on the ultrafilter/scaling factor (those choices are interrelated). Roughly speaking, the asymptotic cones of $X$ do not depend on the scaling factor if $X$ ``looks the same'' at each large scale. Many classes of groups have asymptotic cones which do not depend (up to bilipschitz homeomorphism) on the scaling factor/ultrafilter, for example abelian groups, nilpotent groups \cite{P}, lattices in $SOL$ \cite{dC}, hyperbolic groups \cite{DP}, groups hyperbolic relative to subgroups whose asymptotic cones do not depend on the scaling factor/ultrafilter \cite{OsS}, \cite{Si2}. The first example of a (non finitely presented) group whose asymptotic cones do depend on the scaling factor has been found by Thomas and Velickovic in \cite{TV}, while the first finitely presented example is due to Ol'shanskii and Sapir \cite{OS}.
\par
A striking example of dependence on the ultrafilter has been found in \cite{KSTT}, where it is shown that there are groups (namely lattices in certain Lie groups) all whose asymptotic cones are homeomorphic if the Continuum Hypothesis $(CH)$ holds, while they have $2^{2^{\aleph_0}}$ pairwise non-homeomorphic asymptotic cones if $(CH)$ fails. That paper also contains the proof that under $(CH)$ a group can have at most $2^{\aleph_0}$ pairwise non-homeomorphic asymptotic cones.
\par
Dru\c{t}u and Sapir \cite{DS} provided an example of a group which achieves the maximum (under $(CH)$) cardinality $2^{\aleph_0}$ of pairwise non-homeomorphic asymptotic cones, and in order to construct such an example they used iterated asymptotic cones. In particular, they showed that an asymptotic cone of an asymptotic cone of $X$ is again an asymptotic cone of $X$, and they found $2^{\aleph_0}$ pairwise non-homeomorphic spaces among the iterated asymptotic cones of a suitably chosen group. In other cases, it is easier to show that for a certain space iterated asymptotic cones, rather than all asymptotic cones, are homeomorphic (compare the final remark of \cite{OsS} and \cite[Theorem 0.6]{Si2}).
\par
In this paper we will study iterated asymptotic cones. Examples of interesting behaviors with respect to iterated cones has been found in \cite{Sc} as the result of the fact that all proper metric spaces can be realized as asymptotic cones (see also \cite{Si1}).
\par
Our strategy in this paper is to define a class of metric spaces, the \emph{bullseye spaces}, which are closed under ultralimits and asymptotic cones (with fixed base-point) and which encode 0-1 sequences (well defined up to shift when we consider those spaces up to homeomorphism). Those spaces exhibit a wide range of behaviors with respect to the procedure of iterating the asymptotic cone. It will be easy to have some control on the asymptotic cone of a bullseye space $X$ as the 0-1 sequence associated to the asymptotic cone is an ultralimit of the sequence associated to $X$. The simplest application of this property will be to show that many bullseye spaces have $2^{\aleph_0}$ pairwise non-homeomorphic asymptotic cones (Proposition~\ref{uncountcones}).
\par
Our main theorem is the following ($\Cone_\mu^i(X,e,\alpha)$ denotes the $i-$th iterated asymptotic cone, see Definition~\ref{iteconedef} for details).

\begin{thm}
There exists a group $G$ and a scaling factor $\alpha$, such that for any ultrafilter $\mu$ and any natural numbers $i,j$ with $i \not= j$ the iterated cones $\Cone_\mu^i(G,e,\alpha)$ and $\Cone_\mu^j(G,e,\alpha)$ are not homeomorphic.
\end{thm}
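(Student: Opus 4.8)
The plan is to reduce the statement to a purely combinatorial problem about $0$–$1$ sequences on $\IZ$ and then to realize the resulting data inside a group. By the facts recalled above, a bullseye space is determined up to homeomorphism by its associated sequence up to shift, and the asymptotic cone (for the fixed base-point, with ultrafilter $\mu$ and scaling factor $\alpha$) of the bullseye space $X_s$ is again a bullseye space $X_{T(s)}$, where, writing $\alpha_k=2^{a_k}$, the operator $T$ is the ultralimit $T(s)_m=\lim_\mu s_{a_k+m}$. Iterating the cone therefore iterates $T$, so it suffices to produce a single sequence $s$ and a scaling factor $\alpha$ for which the sequences $T^i(s)$ are pairwise shift-inequivalent, and this for every $\mu$ at once; one then feeds $s$ into a group.

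The device that makes the construction independent of $\mu$ is eventual constancy. If one arranges that for each fixed offset $m$ the value $s_{a_k+m}$ is constant for all large $k$, then $\lim_\mu s_{a_k+m}$ equals that constant for every $\mu$, so $T(s)$ is forced. Concretely I would build sequences $s=P_0,P_1,P_2,\dots$ together with $(a_k)$ so that, for each $i$ and each fixed $m$, one has $(P_i)_{a_k+m}=(P_{i+1})_m$ for all large $k$. This single condition also controls the higher cones: in the double ultralimit computing $T^2(s)_m=\lim_\mu\lim_\mu s_{a_k+a_{k'}+m}$ the inner limit is taken at the fixed (large) offset $a_{k'}+m$, so eventual constancy gives $(P_1)_{a_{k'}+m}$, and the outer limit then gives $(P_2)_m$; inductively $T^i(s)=P_i$ for every $\mu$, so the $i$-th iterated cone is $X_{P_i}$.

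It remains to make the $P_i$ pairwise shift-inequivalent, which I would arrange by a self-similar packing. Fix $(a_k)$ growing fast enough that the blocks below never overlap, and build the $P_i$ simultaneously by the rule that $P_i$ carries a short marker word $u_i$ around its origin and an exact copy of $P_{i+1}$ centred at each position $a_k$, with the rest filled by $0$'s. The convergence condition then holds by construction, and since a copy of $P_{i+1}$ sits at every $a_k$ one gets $X_{T(P_i)}=X_{P_{i+1}}$ on the nose. Choosing the $u_i$ to be distinct marker words, one per level, occurring nowhere else, the level-$i$ marker appears in $P_j$ exactly when $j\le i$ (it is copied inward for $j<i$, but is pushed off to infinity, hence absent, in $P_{i+1},P_{i+2},\dots$). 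Thus the factor sets of the $P_i$ are strictly decreasing, no two are shift-equivalent, and the spaces $X_{P_i}$ are pairwise non-homeomorphic.

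Finally one must pass from the space to a group. Since an asymptotic cone of a group is homogeneous while a bullseye space is not, the cones of $G$ cannot themselves be bullseye spaces; instead I would realize $s$ by a finitely generated group $G$ (with the same $\alpha$) whose iterated cones are tree-graded in the sense of Dru\c{t}u–Sapir with pieces homeomorphic to the bullseye spaces $X_{P_i}$, so that taking one more cone replaces each piece by its own cone and hence advances $P_i$ to $P_{i+1}$. The homeomorphism type of the pieces, and therefore the sequence up to shift, is then a topological invariant of the cone, which separates the levels $i\neq j$. The main obstacle is precisely this last point: engineering the group so that the bullseye pieces are canonically recognizable inside the homogeneous cone (for instance as closures of maximal subsets without cut-points), so that a homeomorphism $\Cone_\mu^i(G,e,\alpha)\cong\Cone_\mu^j(G,e,\alpha)$ would force a homeomorphism of pieces and contradict the strict decrease of factor sets; the uniform-in-$\mu$ combinatorics above is designed exactly so that this argument runs simultaneously for all ultrafilters.
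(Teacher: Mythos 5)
Your reduction to sequence combinatorics is essentially the paper's: bullseye spaces determined up to shift by their sequences, Lemma~\ref{bullseyecone} turning the asymptotic cone into an ultralimit-shift of the sequence, and eventual constancy along the scaling set so that every ultrafilter gives the same answer (the paper phrases this as modifying the ``variable part'' $[\alpha_n-n,\alpha_n+n]$ so that $a_{\alpha_n+k}$ stabilizes to the next level's value). Where you genuinely diverge is the invariant separating the levels: you propose marker words and strictly decreasing factor sets, while the paper supports the variable parts on a \emph{thin} set and gives the level-$i$ sequence asymptotic density $1/(i+1)$, which is shift-invariant and automatically unaffected by the modifications. Your route can probably be made to work, but it carries a technical risk you do not address: the copies of $P_{i+1}$ placed at the positions $a_k$ are necessarily finite windows (infinite copies would overlap), and truncating a window can cut a marker and create spurious shorter factors at the window boundary, so ``$u_i$ occurs nowhere else'' needs an actual argument (e.g.\ padding every window with long runs of zeros). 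The density invariant sidesteps all of this.

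The genuine gap is the group step, which you yourself flag as ``the main obstacle'' and then leave unresolved. The existence of a finitely generated group and a scaling factor $\alpha$ whose asymptotic cone is tree-graded with a prescribed proper geodesic space as a piece is the hard, non-formal content of the theorem; the paper imports it from Dru\c{t}u--Sapir (\cite[Propositions 7.26, 7.27, Lemma 7.5]{DS}, restated as Theorem~\ref{drutugroup}), applied to a tame exhaustion of the bullseye space, and must also check that the $\alpha$ produced there is compatible with the $\alpha$ used in the combinatorics (it can be taken inside $\{2^n\}$, uniformly over all bullseye spaces). Moreover, recognizability of the bullseye piece is not automatic: the tree-graded structure has many other pieces, namely ultralimits of the exhaustion at basepoints drifting to infinity, and one must show none of these is homeomorphic to a bullseye space. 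This is exactly why the bullseye spaces carry $2$-dimensional discs on one ray and $3$-dimensional balls on the other: a piece based at infinity sees only one ray, hence only one kind of decoration, so $X$ (and, after coning, its iterated cones) is the unique bullseye piece of the minimal tree-graded structure of Lemma~\ref{cutpoints}, which is a topological invariant; the iteration is then justified by Lemma~\ref{ittreegraded}. Without supplying these two ingredients --- the construction of $G$ and the exclusion of the other pieces --- your argument does not yet prove the theorem.
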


Indeed, we will first construct an example of metric space satisfying the above property (Theorem~\ref{infmany}) and then use techniques from \cite{DS} to obtain an example within the class of groups (Theorem~\ref{mygroup}).
\par
Finally, we will construct other examples within our class displaying interesting behavior, and some of them can be turned into group examples. For example we will show that there are groups with non-trivially periodic iterated asymptotic cones (Theorem~\ref{periodiccones} and Remark~\ref{groupexamples}):

\begin{thm}
 For each positive integer $m$ there exists a group $G$ and a scaling factor $\alpha$, such that for any ultrafilter $\mu$ we have that $\Cone_\mu^i(G,e,\alpha)$ is homeomorphic to $\Cone_\mu^j(G,e,\alpha)$ for $i,j\geq 1$ if and only if $i \equiv j\ \mbox{mod }m$.
\end{thm}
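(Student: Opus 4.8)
\emph{Proof proposal.}

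The plan is to reduce the statement to a purely combinatorial problem about the $0$--$1$ sequences encoding bullseye spaces, to build such a sequence realizing a cyclic behavior of length $m$, and then to transfer the resulting metric space to a group exactly as in the passage from Theorem~\ref{infmany} to Theorem~\ref{mygroup}. Recall from the bullseye machinery that the homeomorphism type of a bullseye space depends only on its associated $0$--$1$ sequence up to shift, that the class is closed under ultralimits and asymptotic cones, and that the sequence associated to $\Cone_\mu(X,e,\alpha)$ is an ultralimit of shifts of the sequence associated to $X$. Hence it suffices to produce a sequence $s$ and a scaling factor $\alpha$ such that, writing $s^{(i)}$ for the sequence of $\Cone^i_\mu(X,e,\alpha)$, the classes $[s^{(1)}],\dots,[s^{(m)}]$ modulo shift are pairwise distinct and $[s^{(i+m)}]=[s^{(i)}]$ for all $i\ge 1$ and \emph{every} ultrafilter $\mu$.

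I would construct $s$ as a self-similar, hierarchical sequence. Fix $m$ templates and arrange $s$ in super-blocks placed at exponentially growing radii, where the $N$-th super-block carries ``phase'' $N \bmod m$, and choose $\alpha$ so that a single application of the cone advances the phase by exactly one. The super-blocks are taken to grow fast enough that each cone concentrates, $\mu$-almost surely, within a single phase level; this is what makes the resulting limit sequence independent of the choice of $\mu$. The heart of the matter is that the templates must be \emph{non-periodic} and must form an orbit of length exactly $m$ under the cone operation: a genuinely periodic pattern is a fixed point of the cone (its bullseye space is self-similar under rescaling), so periodic templates would collapse the period to $1$. I realize the length-$m$ orbit by a recursive definition in which each template, restricted to a super-block, is a rescaled copy of the next template in the cycle, so that ``zooming out by one level'' cyclically permutes the $m$ templates $\Phi_0\mapsto\Phi_1\mapsto\cdots\mapsto\Phi_{m-1}\mapsto\Phi_0$.

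With the sequence in hand, the verification proceeds as follows. By the key property above, the iterated cones $\Cone^i_\mu(X,e,\alpha)$ realize precisely the $m$ templates, cyclically and independently of $\mu$; this yields $\Cone^i\cong\Cone^j$ whenever $i\equiv j\ (\bmod\ m)$. For the converse I must check that the $m$ templates are pairwise non-shift-equivalent, which I arrange by equipping them with a shift-invariant distinguishing feature (for instance a prescribed pattern of gaps, or distinct orbit closures), so that a homeomorphism $\Cone^i\cong\Cone^j$ forces the corresponding shift classes to coincide and hence $i\equiv j\ (\bmod\ m)$. This establishes the metric-space form of the theorem (the analogue of Theorem~\ref{infmany}, with the aperiodic cascade replaced by a period-$m$ cycle).

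Finally, the group example follows by applying to this metric space the Dru\c{t}u--Sapir embedding technique already used to deduce Theorem~\ref{mygroup} from Theorem~\ref{infmany}: one realizes the bullseye geometry inside a finitely generated group in a way that preserves the association of sequences to iterated cones, so that the group inherits the period-$m$ behavior for a suitable $\alpha$ and every $\mu$ (Remark~\ref{groupexamples}). The main obstacle I expect is the self-similar design of the templates: one must simultaneously guarantee (i) that the nested ultralimits defining the iterated cones produce a \emph{clean} period-$m$ orbit that does not depend on $\mu$, and (ii) that the $m$ limiting templates are pairwise non-homeomorphic. Once the recursive templates and the growth rate of the super-blocks are chosen to control both points, the group transfer is routine given the already-established argument for Theorem~\ref{mygroup}.
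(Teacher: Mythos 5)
Your construction of the underlying metric space is essentially the paper's: Theorem~\ref{periodiccones} is proved by rerunning the proof of Theorem~\ref{infmany} with initial sequences satisfying $a^{(i)}_k=a^{(i+m)}_k$ and having pairwise distinct asymptotic densities for distinct residues mod $m$. The ``variable part'' mechanism plays exactly the role of your super-blocks, asymptotic density is the shift-invariant distinguishing feature you leave unspecified, and independence of $\mu$ comes for free because the relevant windows are eventually constant in $n$. (Your worry that periodic templates would collapse the period is not an obstruction in this framework: the cone of a bullseye is governed by the entries near the scales $\alpha_n$, which the construction prescribes directly, not by global self-similarity of the sequence.)

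The genuine gap is in the group transfer. The argument deducing Theorem~\ref{mygroup} from Theorem~\ref{infmany} only yields the ``only if'' direction of the present statement: it exhibits the bullseye piece as a topological invariant of the minimal tree-graded structure on $\Cone_\mu^i(G,e,\alpha)$, so distinct bullseye pieces force non-homeomorphic cones. It gives no way to conclude that $i\equiv j\ \mbox{mod } m$ implies $\Cone_\mu^i(G,e,\alpha)\cong\Cone_\mu^j(G,e,\alpha)$: two tree-graded spaces whose pieces match up need not be homeomorphic, and nothing in the Theorem~\ref{mygroup} argument addresses this. This is precisely why Remark~\ref{groupexamples} imports \cite[Theorem 0.6]{Si2}, which in this context says the cones are (bilipschitz) homeomorphic if and only if their pieces in the minimal tree-graded structure are bilipschitz homeomorphic; one must then check that every non-bullseye piece is bilipschitz homeomorphic to one of $\IR^2$, $\IR^3$, a half-plane or a half-space, and that for $i\equiv j$ the bullseye pieces are actually isometric (which the construction does give). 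Declaring the transfer ``routine given the already-established argument for Theorem~\ref{mygroup}'' omits this extra ingredient, without which the positive half of the equivalence is unproved; it is also the reason the statement restricts to $i,j\geq 1$.
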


Also, we will provide examples of spaces displaying interesting behaviors with respect to a less rigid definition of iterated cones (Theorem~\ref{asmanyasyouwish}) and transfinite asymptotic cone iteration (Theorem~\ref{transf}), as well as an example of a space with non-homeomorphic iterated cones and ``just'' countably many asymptotic cones (Theorem \ref{onlycount}). 

Large portions of this paper are part of the first author's thesis \cite{Scth} which was written under the supervision of Prof. K. Tent.

\section{Preliminaries}

\subsection{Ultrafilters}

\begin{defi}
Let $I$ be a set. A {\bf filter} $\mu$ on $I$ is a nonempty collection of subsets of $I$, such that for all subsets $A,B \subseteq I$ we have
\begin{itemize}
\item[i)] $\emptyset \notin \mu$.
\item[ii)] $A \in \mu, A \subseteq B \Rightarrow B \in \mu$.
\item[iii)] $A,B \in \mu \Rightarrow A \cap B \in \mu$.
\end{itemize}
The set of all filters on $I$ can be partially ordered by inclusion. It is easy to see that totally ordered subsets have upper bounds and therefore maximal filters exist by Zorn's lemma. Those are called {\bf ultrafilters}. They can be characterized as follows: a filter $\mu$ is an ultrafilter if and only if
\begin{itemize}
\item[iv)] For all $A \subseteq I$ either $A \in \mu$ or $I \setminus A \in \mu$.
\end{itemize}
\end{defi}

An ultrafilter on $I$ can also be regarded as a finitely additive probability measure on $I$, which only takes the values 0 and 1. We say that some property of elements of $I$ holds {\bf $\mu$-almost everywhere} ($\mu$-a.e.) if the set where it holds lies in $\mu$.

\begin{example}
Let $I$ be a set and $i \in I$ a point. Then the collection
\[ \mu_i := \{ A \subseteq I : i \in A \} \]
defines an ultrafilter on $I$. Such an ultrafilter is called {\bf principal}.
\end{example}

Note that for finite sets $I$ each ultrafilter is of this form. Non-principal ultrafilters on $I$ exist if and only if $I$ is infinite: take the collection of all cofinite sets in an infinite $I$. This is a filter and therefore contained in an ultrafilter, which is non-principal since it contains no finite sets.
\par
An ultrafilter $\mu$ on $I$ can be used to assign a limit to any sequence $(x_i)_{i\in I}$ with values in $[0,+\infty]$ (indeed, in any compact Hausdorff topological space). Namely, the $\mu-$limit of $(x_i)$ is the only $a \in [0,+\infty]$ such that every neighborhood of $a$ contains $\mu$-almost every element of the sequence $(x_i)$. Write
\[ \mu-\lim_{i} x_i = a \qquad \mbox{or simply} \quad \mu-\lim x_i = a \]

For later use we also need the definition of product of ultrafilters.

\begin{defi}
Let $I$ be a set and $\mu$ and $\nu$ ultrafilters on $I$. Define the {\bf product} $\mu \times \nu$ on the set $I \times I$ by saying that for $A \subseteq I \times I$ we have
\[ A \in (\mu \times \nu) \iff \{ i \in I : \{j \in I : (i,j) \in A\} \in \nu \} \in \mu.\]
If $I$ is infinite, there is a bijection $ \sigma\colon I \times I \to I$ and we may regard $\mu \times \nu$ again as an ultrafilter on $I$ by taking the preimage of a subset of $I$ under this bijection. Of course the resulting ultrafilter will then depend on the choice of $\sigma$.\\[1ex]
Note that the product is not commutative in general, that is, if $I$ is infinite and $\mu$ and $\nu$ are non-principal ultrafilters on $I$, we might have $\mu \times \nu \not= \nu \times \mu$.
\end{defi}

Suppose that for each pair $i,j \in \IN$, we have a number $x_{ij} \in \IR$ and two ultrafilters $\mu$ and $\nu$ on $\IN$. Then it is easy to see that
\[ \mu-\lim_{i} \big( \nu-\lim_{j} x_{ij} \big) = (\mu\times\nu)-\lim_{(i,j)} x_{ij}.\]

A proof can for example be found in \cite[Lemma 3.22]{DS}.

\subsection{Asymptotic cones}

Let us now define the asymptotic cone of an arbitrary metric space. For more details see \cite{D}.
In what follows if $X$ and $Y$ are metric spaces, the notation $X \cong Y$ will mean that $X$ and $Y$ are isometric.
\begin{defi}
\label{conedefi}
Let $(X,d)$ be a metric space, $\mu$ a non-principal ultrafilter on a countable set $I$, $e=(e_i)$ a sequence of points in $X$ (the \emph{base-point}) and $\alpha=(\alpha_i)$ a sequence of positive real numbers such that $\mu-\lim \alpha_i=+\infty$ (the \emph{scaling factor}\footnote{Sometimes we will first choose a scaling factor and then an ultrafilter. In this case scaling factor will just mean diverging sequence.}).
Consider now the following set:
\[ X^\alpha_e := \left\{ (x_i) \in X^I : \mu-\lim d(x_i,e_i)/\alpha_i<+\infty \right\}.\]
The {\bf asymptotic cone} of $X$ with respect to the base-point $e$, the ultrafilter $\mu$ and the scaling factor $\alpha$ is
$$\Cone_\mu(X,e,\alpha) := X^\alpha_e /_\approx,$$ where
\[ x_n \approx y_n \iff \mu-\lim d(x_n,y_n)/\alpha_i=0.\]
We will denote an equivalence class with respect to $\approx$ by $[x_n]$. The metric $d_\infty$ on $\Cone_\mu(X,e,\alpha)$ is defined by
\[ d_\infty \big([x_n],[y_n]\big) := \mu-\lim d(x_i, y_i)/\alpha_i.\]
\end{defi}

\begin{rmrk}
Sometimes it is convenient to consider more general {\bf $\mu$-limits} of metric spaces. Let $(X_i, d_i)$ be a sequence of metric spaces and consider a point $x = (x_i) \in \prod X_i$. Then the ultralimit of the $X_i$ with basepoint $x$ is defined as the quotient set of
\[ \mu-\lim (X_n, x) := \left\{ (y_i) \in \prod X_i : \mu-\lim d_i(x_i,y_i) < \infty \right\} \]
with respect to the equivalence relation as above. In this light, the construction of the asymptotic cone refers to the special case of setting $(X_i,d_i) := (X, \frac{d}{\alpha_i})$.
\end{rmrk}

\begin{defi}
\label{iteconedef}
[ {\bf Iterated asymptotic cones}.] Fix a non-principal ultrafilter $\mu$ on $\IN$, a scaling factor $\alpha$. For each metric space $X$ and $e\in X$, set $\Cone^0_\mu(X,e,\alpha) := X$, $e(0)=e$ and for $i \in \IN$ set
\[ \Cone^{i+1}_\mu(X,e,\alpha) := \Cone_\mu\big(\Cone^i_\mu(X,e,\alpha),e(i),\alpha \big),\]
\[ e(i+1)=[\widehat{e(i)}], \]
where $\widehat{e(i)}$ is the constant sequence with value $e(i)$.
\par
In order to simplify the notation we will denote each $e(i)$ for $i\geq 1$ by $\hat{e}$.
 
\end{defi}

The following lemma, which can be found in \cite{DS}, Section 3.2, is crucial.
\begin{lemma}
\label{coneofcone}
Let $(X,d)$ be a metric space, $e \in X$ a basepoint and fix two non-principal ultrafilters $\mu$ and $\nu$ on $\IN$ and scaling factors $\alpha$ and $\beta$. Then
\[ \Cone_\mu\big(\Cone_\nu(X,e,\alpha),\hat{e},\beta\big) \cong \Cone_{\mu \times \nu}(X,e,\gamma),\]
where $\gamma$ is the sequence of real numbers indexed by $\IN \times \IN$ defined as
\[ \gamma_{k,n} := \alpha_n \beta_k.\]
\end{lemma}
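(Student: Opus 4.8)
The plan is to construct an explicit isometry $\Phi$ from the iterated cone $\Cone_\mu\big(\Cone_\nu(X,e,\alpha),\hat e,\beta\big)$ onto $\Cone_{\mu\times\nu}(X,e,\gamma)$. Write $C:=\Cone_\nu(X,e,\alpha)$, with metric $d_\nu$, so that the left-hand space is $\Cone_\mu(C,\hat e,\beta)$. A point of this double cone is represented by a sequence $(P_k)_k$ of points $P_k\in C$, and each $P_k$ is in turn represented by a sequence $(x^{(k)}_n)_n\in X^\IN$, so that $P_k=[x^{(k)}_n]$; such data represents a genuine point exactly when $\mu-\lim_k d_\nu(P_k,\hat e)/\beta_k<+\infty$. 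I would define $\Phi$ by sending this point to the class in $\Cone_{\mu\times\nu}(X,e,\gamma)$ of the single family $(k,n)\mapsto x^{(k)}_n$, now viewed as one sequence indexed by $\IN\times\IN$.

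The core of the proof is the observation that $\Phi$ preserves distances, which is a direct consequence of the iterated-limit formula stated just before the lemma. Indeed, for two points represented by $([x^{(k)}_n])_k$ and $([y^{(k)}_n])_k$, the distance in the double cone can be rewritten, using that each $\beta_k$ is a positive constant independent of $n$, as
\[ \mu-\lim_k \frac{d_\nu([x^{(k)}_n],[y^{(k)}_n])}{\beta_k}=\mu-\lim_k\ \nu-\lim_n\frac{d(x^{(k)}_n,y^{(k)}_n)}{\alpha_n\beta_k}=(\mu\times\nu)-\lim_{(k,n)}\frac{d(x^{(k)}_n,y^{(k)}_n)}{\gamma_{k,n}}, \]
and the right-hand side is precisely the distance in $\Cone_{\mu\times\nu}(X,e,\gamma)$ between the images under $\Phi$. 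From this single identity the formal properties follow at once: specializing to the case in which the two points agree shows that $\Phi$ is well defined --- independent of the choice of both the inner representatives $(x^{(k)}_n)_n$ and the outer representative $(P_k)_k$, and with image lying in $X^\gamma_e$ because finiteness of the double-cone distance from $\hat e$ transfers to finiteness of the $(\mu\times\nu)$-limit --- while the identity in full shows that $\Phi$ is distance-preserving, hence injective.

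The one delicate point, which I expect to be the main obstacle, is surjectivity. Given a class $[(w_{k,n})]$ in $\Cone_{\mu\times\nu}(X,e,\gamma)$, the obvious candidate preimage sends each $k$ to the row $(w_{k,n})_n$; the problem is that this row need not define a point of the inner cone $C$ for every $k$. Setting $c_k:=\nu-\lim_n d(w_{k,n},e)/\alpha_n\in[0,+\infty]$, the membership condition $(\mu\times\nu)-\lim_{(k,n)} d(w_{k,n},e)/\gamma_{k,n}<+\infty$ becomes, via the iterated-limit formula, $\mu-\lim_k c_k/\beta_k<+\infty$, and a finite $\mu$-limit forces $c_k<+\infty$, hence $(w_{k,n})_n\in X^\alpha_e$, for $\mu$-almost every $k$. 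On the $\mu$-small set $S$ of remaining indices I would replace the row by the constant basepoint sequence $e$; since $B:=\{(k,n):k\in S\}$ satisfies $B\in\mu\times\nu\iff S\in\mu$, and $S\notin\mu$, the set $B$ is $(\mu\times\nu)$-negligible, so this modification alters neither the finiteness of the norm nor the image class. The corrected family $(P_k)_k$ then genuinely lies in the double cone and satisfies $\Phi\big((P_k)_k\big)=[(w_{k,n})]$, establishing surjectivity and completing the construction of the isometry.
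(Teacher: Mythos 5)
Your proof is correct and is essentially the standard argument (the paper itself does not prove this lemma but defers to \cite[Section 3.2]{DS}, where the same construction appears): one maps a doubly-indexed representative to the corresponding family indexed by $\IN\times\IN$ and reduces everything to the iterated-limit identity $\mu-\lim_k\big(\nu-\lim_n x_{kn}\big)=(\mu\times\nu)-\lim_{(k,n)}x_{kn}$, applied here to $[0,+\infty]$-valued quantities, for which the formula still holds since ultralimits exist in any compact Hausdorff space. Your treatment of surjectivity --- noting that the rows lie in $X^\alpha_e$ only for $\mu$-almost every $k$ and patching the exceptional rows by the basepoint, which changes nothing modulo the $(\mu\times\nu)$-null set $\{(k,n):k\in S\}$ --- is exactly the point that needs care, and you handle it correctly.
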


In particular, if the asymptotic cones of $X$ are all isometric, then so are the iterated asymptotic cones.

\section{Infinite iteration}

We want to give an example of a metric space $X$ having infinitely many pairwise non-homeomorphic iterated cones, which means that for every $i \not= j$ the space $\Cone^i_\mu(X,e,\alpha)$ is not homeomorphic to $\Cone^j_\mu(X,e,\alpha)$.\\
In order to do so we define a family of metric spaces which encode 0-1 sequences in a suitable way. A similar idea has been exploited by Bowditch to show that there are uncountably many quasi-isometry classes of groups \cite{B}.

\begin{defi}
\label{bullseyedef}
Fix a sequence $(a_k)_{k \in \IZ} \in \{0,1\}^\IZ$. We will encode this sequence in a metric space $X$, called {\bf bullseye space} associated to the sequence $(a_k)$. Consider the union of all circles in $\IR^2$ with radii $2^k$, $k \in \IZ$ all centered at the origin and add the origin to the space as a basepoint, called $e$.\\
Now, add the $x-$axis, which can be sees as the union of two rays from the origin to infinity. On one of the rays we put discs on every interval between two circles, rescaled in such a way that the space stays scaling invariant for powers of 2. On the other ray we do the same for 3-dimensional balls.\\
For each $k \in \IZ$, connect the circle of radius $2^k$ to the circle of radius $2^{k+1}$ with a suitable segment contained in the positive part of the $y-$axis if and only if $a_k = 1$. These segments are called {\bf bridges}.\\
Finally, endow the resulting space $X$ with the path metric.

\end{defi}

\bullseyefigure

\begin{rmrk}
 $X$ is geodesic, proper and it does not contain global cut-points (see Definition~\ref{cutp}).
\end{rmrk}

\begin{lemma}
\label{bullseyecone}
Fix a sequence $(a_k)$ as above and a set \[ A = \{ \alpha_0 < \alpha_1 < \cdots \} \subseteq \{2^n : n \in \IN\}.\] 
Set $\alpha := [\alpha_n]$ and fix any ultrafilter $\mu$. Denote the bullseye space associated to $(a_k)$ by $X$. Then $\Cone_\mu(X,e,\alpha)$ will be isometric to a bullseye space associated to the sequence $(b_k)$ given by
\[ b_k = \lim_\mu a_{\alpha_n + k}.\]
\end{lemma}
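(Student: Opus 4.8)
The plan is to exploit the self-similarity of bullseye spaces under scaling by powers of $2$, so as to reduce the computation of the cone to an ultralimit of bullseye spaces with shifted defining sequences. First I would record the scaling property: writing $\alpha_n=2^{m_n}$, the homothety $x\mapsto 2^{-m_n}x$ of $\IR^2$ carries the bullseye space associated to $(a_k)$, equipped with the metric $d/\alpha_n$, isometrically onto the bullseye space $X_n$ associated to the shifted sequence $(a_{k+m_n})_k$. Indeed it sends the circle of radius $2^k$ to the circle of radius $2^{k-m_n}$, multiplies all path lengths by $2^{-m_n}$ (so that it is an isometry for the metric $d/\alpha_n$), and carries the bridge between the circles of radii $2^k$ and $2^{k+1}$ (present iff $a_k=1$) to a segment between the circles of radii $2^{k-m_n}$ and $2^{k-m_n+1}$, which in $X_n$ is the level-$(k-m_n)$ bridge, present iff $a_{(k-m_n)+m_n}=a_k=1$; the discs and balls match because the construction in Definition~\ref{bullseyedef} makes the decorations scale-invariant under multiplication by powers of $2$. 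Via the Remark after Definition~\ref{conedefi}, the cone $\Cone_\mu(X,e,\alpha)$ is the ultralimit, based at the origin, of the rescaled spaces $(X,d/\alpha_n)$, and by the scaling property this is the ultralimit of the $X_n$, all based at the origin $e$.

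It then remains to show that the ultralimit, based at the origin, of a sequence of bullseye spaces associated to sequences $(c^{(n)}_k)_k$ is again the bullseye space associated to the pointwise limit $b_k=\lim_\mu c^{(n)}_k$; applied to $c^{(n)}_k=a_{k+m_n}$ this yields the claimed formula for $(b_k)$. I would build the isometry explicitly. A point of the cone is represented by a sequence $(y_n)$ of $\mu$-bounded distance to the origin, so for $\mu$-almost every $n$ the point $y_n$ lies at bounded radius; taking $\mu$-limits of its radial and angular coordinates (and of the coordinate along a decoration when $y_n$ sits on a disc or a ball) produces a well-defined point of $X_{(b_k)}$, independent of the chosen representative. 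Surjectivity onto $X_{(b_k)}$ is immediate for points on the circles, on the two axes, and on the decorations; the only case using the defining property of $b_k$ is a bridge point at level $k$, which exists in $X_{(b_k)}$ exactly when $b_k=1$, that is, when $c^{(n)}_k=1$ for $\mu$-almost every $n$, in which case the level-$k$ bridge is present in $X_n$ for $\mu$-almost all $n$ and can be used to realize the point.

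The main obstacle, and the bulk of the work, is to check that this map is distance-preserving, i.e.\ that the $\mu$-limit of the path metrics of the $X_n$ agrees with the path metric of $X_{(b_k)}$. The key structural fact is that a path between two points at bounded radius makes radial progress only along the two axes (always present) or across bridges, and that the self-similar decay of the decorated radial lengths forces the distance between two given points to depend on only \emph{finitely many} bridge levels $k$, namely those spanned by the radii of the endpoints. For each such level the indicator of the bridge in $X_n$ converges under $\mu$ to $b_k$, so a shortest path in $X_{(b_k)}$ lifts to paths in the $X_n$ of asymptotically the same length (using that bridges with $b_k=1$ survive $\mu$-a.e.), giving the upper bound; conversely, a sequence of shortest paths in the $X_n$ can only exploit bridges present $\mu$-a.e., hence those with $b_k=1$, so their limiting length is at least the distance in $X_{(b_k)}$, giving the lower bound. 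The delicate point is to make this finiteness and the lifting and descent of geodesics uniform in $n$, so that bridges absent on a $\mu$-null set genuinely disappear in the limit and no phantom shortcut is created; once this is secured the two bounds coincide and the map is the desired isometry.
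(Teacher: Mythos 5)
Your proof is correct and follows essentially the same route as the paper's own (much briefer) argument: rescaling by $\alpha_n=2^{m_n}$ shifts the defining sequence, the cone is the based ultralimit of the shifted bullseye spaces, and a bridge survives in the limit iff it is present $\mu$-almost everywhere; you simply carry out in detail the isometry verification that the paper leaves implicit. Your normalization $a_{k+m_n}$, rather than the statement's literal $a_{\alpha_n+k}$, is the correct reading of the shift.
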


\begin{proof}
When rescaled by a power of 2, $X$ will still contain circles of length $2^k\pi$ for each $k$, a ``central'' point $e$ and the ``decorated'' rays we described above. Therefore, the same holds true for each asymptotic cone of $X$ (with basepoint $e$). Also, such asymptotic cone has a bridge between the circle of $2^k$ and $2^{k+1}$ if and only if the set of rescaled spaces having a bridge between $2^{\alpha_n + k}$ and $2^{\alpha_n + k + 1}$ has measure 1 with respect to $\mu$. This proves the assertion.
\end{proof}

We want to be able to distinguish the spaces corresponding to sequences we use and for this purpose we will use the following invariant.

\begin{defi}
Let $(a_k)$ be a sequence as above. The {\bf asymptotic density} of $(a_k)$ is defined as
\[ \adn(a_k) :=  \limsup_{n \to \infty} \frac{1}{2n + 1} \cdot \sum_{k=-n}^n a_k.\]
\end{defi}

\begin{lemma}
\label{asdense}
Let $(a_k)$ be a 0-1 sequence. Fix $N \in \IN$ and consider the shifted sequence $b_k := a_{k+N}$. Then
\[ \adn(b_k) = \adn(a_k).\]
\end{lemma}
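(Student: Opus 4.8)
The plan is to reduce everything to a direct comparison of the symmetric partial sums defining the two densities, and then to observe that shifting the index affects only boundary terms whose contribution is negligible after normalization.

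First I would rewrite the defining sum for the shifted sequence. Setting $j = k + N$ in $\sum_{k=-n}^{n} b_k = \sum_{k=-n}^{n} a_{k+N}$ gives $\sum_{j=-n+N}^{n+N} a_j$. Thus the $n$-th average for $(b_k)$ equals the average of $(a_j)$ taken over the window $[-n+N,\, n+N]$ rather than over the symmetric window $[-n,\, n]$ appearing in $\adn(a_k)$.

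Next I would compare the two windows. Passing from $\sum_{j=-n}^{n} a_j$ to $\sum_{j=-n+N}^{n+N} a_j$ amounts to deleting the $N$ terms $a_{-n}, \ldots, a_{-n+N-1}$ and inserting the $N$ terms $a_{n+1}, \ldots, a_{n+N}$. Since every $a_j \in \{0,1\}$, this changes the sum by at most $2N$ in absolute value. Dividing by $2n+1$, the two averaged sequences differ by at most $\frac{2N}{2n+1}$, which tends to $0$ as $n \to \infty$.

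Finally I would invoke the elementary fact that two real sequences differing by a quantity tending to $0$ have the same $\limsup$; applying this to the two averages yields $\adn(b_k) = \adn(a_k)$. There is no genuine obstacle here—the only point requiring a little care is the bookkeeping of the boundary terms (which $N$ terms are dropped and which are added) so that the uniform bound $2N$ is correctly obtained, after which the normalization $\frac{1}{2n+1}$ kills the discrepancy.
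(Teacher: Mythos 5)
Your proof is correct and follows essentially the same route as the paper's: both compare the symmetric window $[-n,n]$ with the shifted window $[-n+N,\,n+N]$, bound the discrepancy of the boundary terms by a constant multiple of $N$ (the paper gets $N/(2n+1)$, you get the slightly cruder but equally sufficient $2N/(2n+1)$), and conclude via invariance of $\limsup$ under perturbations tending to $0$. No issues.
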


\begin{proof}
For $n > N$ we have
\[ \left| \frac{1}{2n+1}\left( \sum_{k=-n}^n a_k - \sum_{k=-n+N}^{n+N} a_k \right) \right| = \left| \frac{1}{2n+1}\left( \sum_{k=-n}^{-n+N}a_k - \sum_{k=n}^{n+N} a_k \right) \right| \leq \frac{N}{2n+1}\]
and this tends to 0 for fixed $N$ and $n \to \infty$, therefore $\adn(a_k) = \adn(a_{k+N})$.
\end{proof}

\begin{defi}
\label{thinsequence}
A set $A \subseteq \IN$ given by $A = \{ \alpha_0 < \alpha_1 < \alpha_2 < \cdots \}$ is called {\bf thin} if
\[ \lim_{n \to \infty} \frac{\alpha_{n+1}}{\alpha_n} = \infty.\]
\end{defi}

For example, the set $\{ n! : n \in \IN \}$ is thin.
We can now state and prove the main theorem.

\begin{thm}
\label{infmany}
There exists a metric space $X$ with basepoint $e$ and a scaling factor $\alpha$, such that for any ultrafilter $\mu$ and any natural numbers $i,j$ with $i \not= j$ the iterated cones $\Cone_\mu^i(X,e,\alpha)$ and $\Cone_\mu^j(X,e,\alpha)$ are not homeomorphic.
\end{thm}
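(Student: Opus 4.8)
The plan is to realise $X$ as a bullseye space and to use the asymptotic density $\adn$ as a homeomorphism invariant. Recall that the homeomorphism type of a bullseye space determines its defining $0$-$1$ sequence up to shift, and that by Lemma~\ref{asdense} the quantity $\adn$ is shift-invariant; hence two bullseye spaces with different asymptotic densities cannot be homeomorphic. So it suffices to produce a sequence $(a_k)$ and a scaling factor $\alpha$ for which the sequences attached to the iterated cones $\Cone^i_\mu(X,e,\alpha)$ have pairwise distinct asymptotic densities, for every ultrafilter $\mu$.

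First I would pin down the sequence attached to the $i$-th cone. Writing $\alpha_n = 2^{t_n}$ with $t_n\in\IN$, Lemma~\ref{bullseyecone} says that the cone of the bullseye space for $(a_k)$ is again a bullseye space, for the sequence $k\mapsto \mu-\lim_n a_{k+t_n}$. Since a rescaled bullseye space is again a bullseye space and the scaling factor is unchanged at each stage, I can iterate this; using Lemma~\ref{coneofcone} together with the identity $\mu-\lim_m\nu-\lim_n = (\mu\times\nu)-\lim$ from the preliminaries to unwind the nested limits, the $i$-th cone is the bullseye space for $b^{(i)}_k := \mu^{\times i}-\lim a_{k+t_{n_1}+\cdots+t_{n_i}}$. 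Because $\adn$ commutes with the finite inner sum and with the $\mu$-limit, this yields the working formula
\[ \adn\big(b^{(i)}\big) = \limsup_{M\to\infty}\ \Big(\mu^{\times i}-\lim\ \frac{1}{2M+1}\sum_{|k|\le M} a_{k+t_{n_1}+\cdots+t_{n_i}}\Big), \]
so that $\adn(b^{(i)})$ is governed by the local densities of $(a_k)$ in bounded windows centred at the $i$-fold sums $t_{n_1}+\cdots+t_{n_i}$.

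Now I would engineer $(a_k)$ so that these local densities read off a prescribed value at each level. Choose the exponents $t_n$ to grow in tower fashion (e.g.\ $t_{n+1}\ge 2^{t_n}$), which makes $\alpha$ thin and, crucially, makes every integer a subset sum of the $t_n$ in at most one way; in particular the number of summands of such a sum is intrinsic. Fix the pairwise distinct targets $d_0:=0$ and $d_i:=1/(i+1)$ for $i\ge 1$. For each finite $F=\{n_1<\cdots<n_i\}$ I place, on the window of slowly growing radius $r_F:=\lceil\log_2 t_{n_1}\rceil$ centred at $S_F:=\sum_j t_{n_j}$, the $(i+1)$-periodic pattern of density $d_i$; where two windows overlap I give priority to the larger radius (equivalently, the larger smallest index), and I set $a_k=0$ away from all windows. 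The key point to check is that for fixed $M$ and $\mu^{\times i}$-almost every tuple the window around $S=t_{n_1}+\cdots+t_{n_i}$ sees \emph{only} the level-$i$ pattern: since $\mu$ is non-principal, the indices $n_j$ are $\mu^{\times i}$-a.e.\ pairwise distinct and arbitrarily large, so $r_F>M$, and any competing subset sum meeting $[S-M,S+M]$ must differ from $S_F$ only in generators bounded in terms of $M$, forcing its radius below $r_F$; thus $S_F$ wins the priority contest and the window density tends to $d_i$. Hence $\adn(b^{(i)})=d_i$. A separate, easier count shows the generators below $X$ number only $O(\log^\ast X)$, so the patterns have global density $0$ and $\adn(b^{(0)})=\adn(a)=0=d_0$. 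As the $d_i$ are pairwise distinct, the cones are pairwise non-homeomorphic.

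The main obstacle is exactly the overlap/priority bookkeeping in this last step. I must verify that the coarse, low-priority clusters created by adjoining small-index generators to an all-large sum never pollute the window read at the correct level, and that distinct all-large-index sums, even of different cardinalities, stay far apart. Both statements reduce to elementary but slightly delicate gap estimates for subset sums of a super-lacunary sequence, and the real tension is calibrating the radius growth $r_F$ to be slow enough to keep the relevant windows disjoint, yet large enough to swallow every fixed window size $M$ in the $\mu^{\times i}$-limit.
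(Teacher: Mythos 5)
Your overall strategy---encode a $0$--$1$ sequence in a bullseye space, identify the sequence of the $i$-th cone via Lemma~\ref{bullseyecone} and Lemma~\ref{coneofcone} as the iterated ultralimit $b^{(i)}_k=\mu^{\times i}-\lim a_{k+t_{n_1}+\cdots+t_{n_i}}$, and separate the cones by the shift-invariant density $\adn$---is exactly the paper's, and that part of your argument (including the reduction to local densities in windows of fixed radius $M$ around the $i$-fold sums) is correct. The gap is in the explicit construction of $(a_k)$, precisely at the overlap/priority step you flag as the main obstacle. With $t_{n+1}\ge 2^{t_n}$ and $r_{F'}=\lceil\log_2 t_{\min F'}\rceil$ you get $r_{F'}\ge t_{\min F'-1}$, so the radius of a window is \emph{not} small compared to the generators below its smallest index: it dominates all of them. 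Concretely, take $F=\{n_1<\cdots<n_i\}$ and $F'=\{n_2,\dots,n_i\}$. Then $|S_F-S_{F'}|=t_{n_1}$ while $r_{F'}\ge t_{n_2-1}\ge t_{n_1}$, so the $F'$-window contains $S_F$; for $\mu^{\times i}$-a.e.\ tuple it contains all of $[S_F-M,S_F+M]$, since a.e.\ $n_2>n_1+1$ and then $r_{F'}\ge t_{n_1+1}\ge 2^{t_{n_1}}\gg t_{n_1}+M$. Since $\min F'>\min F$, your priority rule awards this region to $F'$, so the level-$i$ pattern is never actually written near $S_F$ and the $i$-th cone reads the $F'$-pattern instead. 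For $i=2$ this gives $b^{(2)}$ equal (up to shift) to the $2$-periodic pattern, i.e.\ $\adn(b^{(2)})=1/2=\adn(b^{(1)})$ and $\Cone^2_\mu(X,e,\alpha)\cong\Cone^1_\mu(X,e,\alpha)$. The defect is structural, not a matter of constants: since $\mu^{\times i}$-a.e.\ $n_2$ exceeds any prescribed function of $n_1$, \emph{any} radius of the form $r_{F'}=g(\min F')$ with $g\to\infty$ eventually exceeds $t_{n_1}+M$, so under ``larger smallest index wins'' the set $F\setminus\{\min F\}$ always steals the window at $S_F$. Your justification (``any competing subset sum meeting $[S-M,S+M]$ must differ from $S_F$ only in generators bounded in terms of $M$'') fails exactly here: $F$ and $F'$ differ in the unbounded generator $n_1$, yet their windows still meet because $r_{F'}$ is enormous.

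For comparison, the paper sidesteps this bookkeeping entirely. It fixes target sequences $(a^{(i)}_k)$ with $\adn=1/(i+1)$ and then modifies only the \emph{variable part} $\bigcup_n[\alpha_n-n,\alpha_n+n]$, which has relative density $0$ in the windows $[-n,n]$ because $A$ is thin and hence does not affect $\adn$; the modification is done inductively so that the cone of the bullseye of $(a^{(i)})$ is the bullseye of $(a^{(i+1)})$, with each change propagated back to the lower levels, and each entry is touched only finitely often, so a limit sequence exists. If you want to salvage a direct construction you would at least have to reverse the priority (smaller $\min F$ wins) and then control the opposite contamination, namely windows of $F\cup\{m\}$ for small $m$ intruding into $[S_F-M,S_F+M]$; the paper's inductive scheme is the cleaner way out.
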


\begin{proof}
Fix any ultrafilter $\mu$ on $\IN$. Take a thin set $A = \{\alpha_0 < \alpha_1 < \cdots \} \subseteq \{2^n : n \in \IN\}$ and $\alpha := [\alpha_n]$. For any sequence $(a_k) \in \{0,1\}^\IZ$, call the numbers $a_k$ with $k \in [ \alpha_n - n, \alpha_n + n]$ for some $n \in \IN$ the {\bf variable part} of the sequence. Its complement will be called the {\bf fixed part}. Let us also assume $\alpha_0 \gg 0$ and that the intervals given above are disjoint.\\
For every $i \in \IN$ define now a sequence $(a_k^{(i)})$ with $\adn(a_k^{(i)}) = 1/(i+1)$ (the only property we actually need is that these densities are different). Note that since the set $A$ is thin, the density will still be defined and will have the same value if you modify the sequence $(a_k^{(i)})$ on the variable part, since the relative amount of the variable part in any given interval of the form $[-n,n]$ in the sequence tends to 0 as $n$ goes to infinity.\\
Next, modify the sequence $(a_k^{(0)})$ in such a way that the cone of the bullseye space $X$ associated to $(a_k^{(0)})$ is the bullseye space associated to $(a_k^{(1)})$. By Lemma \ref{bullseyecone}, it is enough to modify $(a_k^{(0)})$ on the variable part, not changing its density.\\
Then iterate this process, modifying the variable part of $(a_k^{(i)})$ in such a way that the cone of the bullseye space associated to this sequence is the bullseye space associated to $(a_k^{(i+1)})$. This change has to be reflected in all the $(a_k^{(j)})$ with $j < i$ as well. Since by assumption $\alpha_0 \gg 0$, this process yields a well-defined limit sequence, which we denote by $(a_k^{[i]})$, because every fixed entry in any given sequence is modified only finitely many times.\\
Now, define the space $X$ as the bullseye space associated to the sequence $(a_k^{[0]})$. It is easy to see that if two bullseye spaces are homeomorphic, they correspond to the same underlying sequence, up to a shift, because the rays can only be sent to the same rays using a homeomorphism. From this, it follows that for any numbers $i,j \in \IN$ with $i \not= j$, the spaces $\Cone_\mu^i(X,e,\alpha)$ and $\Cone_\mu^j(X,e,\alpha)$ cannot be homeomorphic by Lemma \ref{asdense}, since the underlying sequences have different asymptotic densities.
\end{proof}

\section{Iterated cones of groups}

\subsection{Tree-graded spaces}

To give the desired example of a finitely generated group with countably many pairwise non homeomorphic iterated asymptotic cones, we use a result by Dru\c{t}u and Sapir from \cite{DS}. To state and explain this result, we first need the notion of tree-graded spaces.

\begin{defi}[\cite{DS}, Definition 2.1]
Let $X$ be a complete geodesic metric space and let $\mathcal{P}$ be a collection of closed geodesic subsets, called {\bf pieces}, which cover the space $X$. We say that $X$ is {\bf tree-graded} with respect to $\mathcal{P}$ if
\begin{itemize}
\item[(T1)] The intersection of any two different pieces is either empty or a single point.
\item[(T2)] Every simple geodesic triangle in $X$ is contained in one piece.
\end{itemize}
\end{defi}

The second property can be substituted by ``every simple loop in $X$ is contained in one piece'', providing a topological characterization of tree-graded spaces.
\par
We will need another result of Dru\c{t}u and Sapir, stating that there exists for each tree-graded space a minimal set of pieces.

\begin{defi}
Let $X$ be a metric space which is tree-graded with respect to two sets of pieces $\mathcal{P}$ and $\mathcal{P}'$. Write $\mathcal{P} \prec \mathcal{P}'$ if for every $A \in \mathcal{P}$ there is a piece $A' \in \mathcal{P}'$ such that $A \subseteq A'$. Note that this defines a partial order.
\end{defi}

\begin{defi}
\label{cutp}
Let $X$ be a geodesic metric space. A point $x \in X$ is called a {\bf global cut-point} of $X$ if the space $X \backslash \{x\}$ is not path connected.
\end{defi}

\begin{lemma}[\cite{DS}, Lemma 2.31]
\label{cutpoints}
Let $X$ be a complete geodesic space containing at least two points.
There exists a unique minimal tree-graded structure $\mathcal{P}$ for $X$ (with respect to $\prec$), such that $X$ is tree-graded with respect to $\mathcal{P}$ and any piece in $\mathcal{P}$ is either a singleton or a set $P$ with no global cut-point.
\end{lemma}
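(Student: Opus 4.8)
The plan is to build the minimal structure explicitly as the metric analogue of the block (biconnected‑component) decomposition of a graph, the continuous version of a block–cut tree. For $x,y\in X$ call $x$ and $y$ \emph{inseparable} if for every global cut‑point $c\in X$ with $c\neq x,y$ the points $x$ and $y$ lie in the same path‑component of $X\setminus\{c\}$, and let the candidate collection $\mathcal P_0$ consist of the (closures of the) maximal inseparable subsets of $X$ having more than one point, together with the singletons $\{c\}$ for each global cut‑point $c$ not already interior to such a subset. These cover $X$ by construction, and I would first check they are closed geodesic subsets: here completeness and geodesicity enter, since a geodesic joining two inseparable points must stay inside their block (a cut‑point on it would separate its endpoints), so the blocks are geodesically convex and their closures remain geodesic.

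Next I would verify the two axioms, for which the topological characterization recalled after the definition of tree‑graded (``every simple loop lies in one piece'') is the convenient tool. For (T2), if $\gamma$ is a simple loop and $x,y\in\gamma$, then for any global cut‑point $c\neq x,y$ the set $\gamma\setminus\{c\}$ is an arc (or all of $\gamma$, if $c\notin\gamma$), hence connected and containing $x,y$; thus all points of $\gamma$ are pairwise inseparable and $\gamma$ lies in a single block. Axiom (T1) is the first delicate point: because inseparability is not literally transitive (the failure occurs precisely when the separating cut‑point is one of the two compared points), I cannot argue purely formally, and instead I would prove a $2$-connectivity composition lemma in the spirit of graph blocks — if two maximal inseparable sets shared two distinct points their union would again be inseparable, contradicting maximality — forcing distinct pieces to meet in at most one point, necessarily a global cut‑point. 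Finally, each non‑singleton piece has no global cut‑point of its own: if $p$ split a block $P$ into $P_1,P_2$, then using geodesic convexity together with the fact (from (T1)) that other pieces attach to $P$ at single points, any path in $X$ from $P_1$ to $P_2$ leaving $P$ would have to re‑enter through its exit point, so $p$ would in fact be a global cut‑point of $X$ separating two inseparable points — a contradiction.

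For minimality and uniqueness I would show $\mathcal P_0\prec\mathcal Q$ for every tree‑graded structure $\mathcal Q$ whose pieces are singletons or cut‑point‑free. The structural fact I would establish or quote is that in such a $\mathcal Q$ the global cut‑points of $X$ are governed by the piece pattern: no global cut‑point lies in the interior of a cut‑point‑free piece, so cut‑points occur only where pieces meet or as the distinguished singletons. Consequently an inseparable set cannot straddle two different $\mathcal Q$-pieces, since their intersection point would be a cut‑point separating inseparable points; hence every $\mathcal P_0$-piece sits inside a single $\mathcal Q$-piece, giving $\mathcal P_0\prec\mathcal Q$. Running this comparison between two structures with the stated property in both directions shows they coincide, which yields both existence of a minimum and its uniqueness.

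The step I expect to be the main obstacle is exactly this reconciliation between the \emph{ambient} notion of global cut‑point of $X$ and the \emph{intrinsic} structure of the pieces: controlling excursions that leave and re‑enter a piece, and proving that passing to closures creates no new global cut‑point. This is where geodesic convexity of pieces and the single‑point attachment from (T1) must be combined carefully, and it is the technical heart of the argument; the non‑transitivity of the naive inseparability relation, handled via the $2$-connectivity composition lemma, is the subtlety that makes the otherwise clean block‑decomposition picture require genuine work.
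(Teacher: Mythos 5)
A preliminary but important point: the paper offers no proof of this lemma --- it is imported wholesale from Dru\c{t}u and Sapir (\cite{DS}, Lemma 2.31) and used as a black box, so the only proof to compare yours against is theirs. Your architecture is essentially the same as in \cite{DS}: declare two points inseparable when no global cut-point distinct from both separates them, take maximal pairwise-inseparable sets as the non-singleton pieces, and verify (T1) and (T2) through the simple-loop characterization. Your composition lemma (two maximal inseparable sets sharing two distinct points must coincide) is correct and is exactly the right way to handle the non-transitivity of the relation, and your (T2) argument via arcs is fine.

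One step of your sketch is false as stated, namely the parenthetical claim that a global cut-point lying on a geodesic between two points separates those points. Counterexample: let $X$ be a circle with a segment attached at a point $c$. Then $c$ is a global cut-point, and two points of the circle close to $c$ on opposite sides have their geodesic passing through $c$, yet they are inseparable (join them the long way around). So geodesics between inseparable points may well pass through global cut-points, and your justification of the geodesic convexity of the blocks does not work as written. The conclusion survives via a relay argument: for $x,y\in A$ and $z\in[x,y]$, any cut-point $c\neq z$ misses at least one of the subsegments $[z,x]$, $[z,y]$, so $z$ reaches $x$ or $y$ inside $X\setminus\{c\}$ and then the inseparability of $A$ takes over; hence $A\cup[x,y]$ is pairwise inseparable and maximality gives $[x,y]\subseteq A$. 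The same relay shows that maximal inseparable sets are already closed, which disposes of your worry about closures violating (T1) or acquiring new cut-points. The remaining facts you invoke without proof --- that a topological arc with endpoints in a piece of a tree-graded space cannot leave that piece, and that two points not contained in a common piece are separated by a global cut-point strictly between them --- are genuine basic properties of tree-graded spaces established early in \cite{DS}, Section 2, so the minimality and uniqueness part of your outline is sound, though it rests on exactly the machinery the lemma is normally quoted from.
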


\begin{defi}[cf. \cite{DS}, Definition 3.19]
Let $X$ be a metric space with basepoint $e \in X$. Fix an ultrafilter $\mu$ on $\IN$ and a scaling sequence $\alpha$. Let $\mathcal{A}$ be a collection of subsets of $X$. Then for every sequence $(A_n)$ of sets in $\mathcal{A}$, the set
\[ \Cone_\mu\big((A_n),e,\alpha\big) := \{[x_n] \in \Cone_\mu(X,e,\alpha) : x_n \in A_n \}\]
is a (possibly empty) subset of the asymptotic cone of $X$. We say that $X$ is {\bf asymptotically tree-graded} with respect to $\mathcal{A}$ if $\Cone_\mu(X,e,\alpha)$ is tree-graded with respect to the set of non-empty pieces of the form
\[ \{ \Cone_\mu\big((A_n),e,\alpha\big) : (A_n)_{n \in \IN} \in \mathcal{A}^\IN \}.\]
\end{defi}

We will also need the following useful fact, see \cite[Theorem 3.30]{DS}.

\begin{lemma}
\label{ittreegraded}
Let $X$ be a geodesic metric space, which is tree-graded with respect to a collection of pieces $\mathcal{P}$. Then $X$ is also asymptotically tree-graded with respect to the same set of pieces.
\end{lemma}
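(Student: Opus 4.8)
Fix an arbitrary non-principal ultrafilter $\mu$, a basepoint $e$ and a scaling factor $\alpha$, write $C:=\Cone_\mu(X,e,\alpha)$, and let $\mathcal{P}_\infty$ denote the collection of non-empty limit pieces $\Cone_\mu\big((A_n),e,\alpha\big)$ with $(A_n)\in\mathcal{P}^\IN$ (abbreviated $\Cone_\mu((A_n))$ below). The plan is to check directly that the complete geodesic space $C$ is tree-graded with respect to $\mathcal{P}_\infty$. First I would record the preliminary properties of the limit pieces. They cover $C$, since every $[x_n]$ lies in $\Cone_\mu((A_n))$ for any choice $A_n\ni x_n$. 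Each $Q=\Cone_\mu((A_n))$ is closed in $C$: if $[z]$ lies in its closure, pick representatives $y^k_n\in A_n$ with $d_\infty([z],[y^k])\to 0$ and diagonalise to produce $\tilde y_n\in A_n$ with $[\tilde y_n]=[z]$. Finally $Q$ is a geodesic subspace, because the ultralimit of geodesics of the convex pieces $A_n$ joining $a_n,b_n\in A_n$ is a geodesic of $C$ lying in $Q$.

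A key point I would establish early is that the limit pieces are \emph{convex} in $C$. Here I use the projection property of tree-graded spaces: for a piece $M$ and a point $u\notin M$ there is a unique $\pi_M(u)\in M$ through which every geodesic from $u$ to a point of $M$ passes, and which is a cut-point separating $u$ from $M\setminus\{\pi_M(u)\}$ (see \cite{DS}). Let $[a],[b]\in Q=\Cone_\mu((M_n))$ and let $[p]$ lie on a $C$-geodesic between them; put $\pi_n:=\pi_{M_n}(p_n)$. Since the geodesics $p_n\to a_n$ and $p_n\to b_n$ both pass through $\pi_n$, we get $d(p_n,a_n)+d(p_n,b_n)\ge d(a_n,b_n)+2\,d(p_n,\pi_n)$; dividing by $\alpha_n$ and taking $\mu$-limits, the left-hand side equals $d_\infty([a],[b])$ because $[p]$ lies on a geodesic, so the $\mu$-limit of $d(p_n,\pi_n)/\alpha_n$ is $0$ and $[p]\in Q$. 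The same projection estimate gives (T1): if $Q=\Cone_\mu((A_n))$ and $Q'=\Cone_\mu((B_n))$ are distinct, then $A_n\ne B_n$ $\mu$-almost everywhere, so $A_n\cap B_n$ is at most the single gate point $p_n$; any $[z]\in Q\cap Q'$ has representatives $z_n\in A_n$, $z'_n\in B_n$ with $[z_n]=[z'_n]$ and $d(z_n,p_n)\le d(z_n,z'_n)$, whence $[z]=[p_n]$ and $Q\cap Q'\subseteq\{[p_n]\}$.

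It remains to verify (T2). Convexity lets me reduce it to the claim that the three vertices of a simple geodesic triangle lie in a common limit piece: once the vertices lie in a convex $Q$, so do all three geodesic sides. So suppose the vertices $P_0,P_1,P_2$ of a simple geodesic triangle do \emph{not} lie in a common limit piece. Fixing representatives $x^i_n$, I would invoke the structure of geodesic triangles in the tree-graded space $X$: the three points admit a central piece (or point) $M_n$ which every side reaches through the gate $\pi_{M_n}(x^i_n)$ (see \cite{DS}). If, say, the $\mu$-limit of $d(x^0_n,M_n)/\alpha_n$ is $\delta>0$, then $[\pi^0]:=[\pi_{M_n}(x^0_n)]$ is a point of $Q=\Cone_\mu((M_n))$ distinct from $P_0$; and since each $\pi_{M_n}(x^0_n)$ is a cut-point of $X$ separating $x^0_n$ from $M_n\setminus\{\pi_{M_n}(x^0_n)\}$, the point $[\pi^0]$ separates $P_0$ from $Q\setminus\{[\pi^0]\}$ in $C$. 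Hence every path, in particular the two sides $P_0P_1$ and $P_0P_2$, must cross $[\pi^0]$, so these sides share a point that is not a common vertex, contradicting simplicity. Therefore the vertices lie in a common piece and (T2) holds.

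The hard part is precisely the last step, and more specifically the transfer of the \emph{separation} property from $X$ to $C$: I must show that $[\pi^0]$ is a genuine global cut-point of $C$, so that \emph{every} geodesic side, not merely the ultralimits of the chosen $X$-geodesics, is forced through it. This means upgrading the metric ``betweenness'' estimate of the second paragraph to an actual disconnection statement in $C$, which is where the full strength of the tree-graded hypothesis on $X$ (uniform control of how paths from $x^0_n$ must pass through $\pi_{M_n}(x^0_n)$) and the completeness of $C$ enter. The remaining verifications, namely existence and uniqueness of the central piece $M_n$ and the routine ``$\mu$-almost everywhere'' bookkeeping in passing to representatives, are comparatively mechanical once this separation is in place.
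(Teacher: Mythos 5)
The paper does not actually prove this lemma: it is quoted verbatim from Dru\c{t}u--Sapir as \cite[Theorem 3.30]{DS}, so there is no in-paper argument to compare yours against; your direct verification of (T1)--(T2) in the cone is in the spirit of the proof given there. Your preliminary steps (covering, closedness, geodesicity, the convexity of limit pieces via the gate $\pi_{M_n}(p_n)$ and the inequality $d(p_n,a_n)+d(p_n,b_n)\ge d(a_n,b_n)+2d(p_n,\pi_n)$, and the (T1) estimate $d(z_n,p_n)\le d(z_n,z'_n)$) are all correct, modulo the routine case $A_n\cap B_n=\emptyset$, where $p_n$ should be the projection point of $B_n$ onto $A_n$.

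The genuine gap is the one you flag yourself: the proposal ends by announcing, rather than proving, that $[\pi^0]$ separates $P_0$ from $Q\setminus\{[\pi^0]\}$ in $C$, and without that (or a substitute) the contradiction with simplicity in (T2) is not established. Cut points of the $X_n$ do not automatically pass to cut points of an ultralimit, so as written the decisive step is missing. However, you are asking for more than you need: you do not have to upgrade anything to a disconnection statement in $C$. A purely metric argument at the level of representatives suffices. Let $p_n=\pi_{M_n}(x^0_n)$ and let $[w]$ be the point on a given $C$-geodesic from $P_0$ to $P_1$ at distance $d_\infty(P_0,[\pi^0])$ from $P_0$. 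For $\mu$-almost every $n$, either $w_n$ lies in the component of $X\setminus\{p_n\}$ containing $x^0_n$, in which case every path from $w_n$ to $x^1_n$ passes through $p_n$ and so $d(w_n,x^1_n)=d(w_n,p_n)+d(p_n,x^1_n)$; or it does not, in which case $d(x^0_n,w_n)=d(x^0_n,p_n)+d(p_n,w_n)$. In either case, combining with $d(x^0_n,w_n)+d(w_n,x^1_n)\le d(x^0_n,x^1_n)+o(\alpha_n)=d(x^0_n,p_n)+d(p_n,x^1_n)+o(\alpha_n)$ and with $d(x^0_n,w_n)=d(x^0_n,p_n)+o(\alpha_n)$ forces $d(w_n,p_n)=o(\alpha_n)$, i.e.\ $[w]=[\pi^0]$. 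This shows every geodesic side from $P_0$ passes through $[\pi^0]$, which is what your simplicity contradiction requires. You should also dispose explicitly of the degenerate cases (the ``centre'' of the triangle of representatives being a single cut point rather than a piece of $\mathcal{P}$, and the case where all three limits $\mu$-$\lim d(x^i_n,M_n)/\alpha_n$ vanish, which puts the vertices in the common piece $\Cone_\mu((M_n))$ directly). With these additions the argument closes; as submitted it is an incomplete sketch at its crucial point.
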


\subsection{Infinite iteration for groups}

We will use the following result, which is proven (though not stated) in \cite[Section 7]{DS}.

\begin{defi}
 A \emph{tame exhaustion} of a geodesic metric space $X$ is a family $\{B_n\}_{n\in\IN}$ of subsets of $X$ with the property that for each each ball $B$ in $X$ there exists $N$ such that, for each $n\geq N$, $B$ is contained in $B_n$. Also, we require that for each $x,y\in B_n$ there exists a geodesic connecting them which is contained in $B_{2n}$.
\end{defi}

\begin{thm}[\cite{DS}, Proposition 7.26, Proposition 7.27, Lemma 7.5]
\label{drutugroup}
Let $X$ be a proper geodesic metric space and $e\in X$. There exists a scaling factor $\alpha=(\alpha_n)$ and a group $G$ with 2 generators such that for every ultrafilter $\mu$ the asymptotic cone $\Cone_\mu(G,e,\alpha)$ is tree-graded with respect to pieces whose collection of isometry classes coincides with
\[  \left\{ \mu-\lim (B_n, x) : x \in \prod B_n \right\}, \]
where $\{B_n\}$ is a tame exhaustion of $X$ and each $B_n$ contains $e$. Also, $\mu-\lim (B_n, e)\cong X$. 
\end{thm}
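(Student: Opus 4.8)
The plan is to follow the graded small cancellation construction of Ol'shanskii, as adapted by Dru\c{t}u and Sapir, encoding the space $X$ into the relators of a two--generator group one scale at a time. First I would fix a tame exhaustion $\{B_n\}$ of $X$ with $e \in B_n$ for all $n$; since $X$ is proper, each $B_n$ is compact, so it admits a finite $\varepsilon_n$-net with $\varepsilon_n \to 0$. I would then choose a very rapidly growing (lacunary) scaling sequence $\alpha = (\alpha_n)$ and, for each $n$, a relator $r_n$ in the free group $F = \langle a, b\rangle$ whose associated loop in the Cayley graph, metrically rescaled by $1/\alpha_n$, is a Gromov--Hausdorff approximation of the finite net of $B_n$ (so that the rescaled loop converges metrically to $B_n$ as the net is refined). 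The lacunarity of $\alpha$ is what guarantees that at the cone scale $\alpha_n$ only the $n$-th relator is visible at unit size, while shorter relators collapse to points and longer ones recede to infinity.

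Second, I would impose a graded small cancellation condition (in the sense of Ol'shanskii, with a small cancellation parameter $\lambda$) on the family $\{r_n\}$, choosing the words that encode the nets so that pieces shared between distinct relators, and between a relator and its own cyclic shifts, are short relative to the relator length. Setting $G = \langle a, b \mid r_1, r_2, \dots\rangle$, the standard van Kampen diagram analysis then shows that $G$ is infinite and that geodesic bigons and triangles in its Cayley graph are controlled: away from the embedded relator loops the graph is tree-like, and two distinct relator loops meet in at most a point up to bounded error. This is exactly the configuration needed to apply the Dru\c{t}u--Sapir criterion for a space to be asymptotically tree-graded with respect to the collection of (neighborhoods of) relator loops.

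Third, I would identify the pieces of $\Cone_\mu(G, e, \alpha)$. By the asymptotically tree-graded criterion, the pieces are the nonempty sets $\Cone_\mu\big((A_n), e, \alpha\big)$ where each $A_n$ ranges over copies of the relator loops; by construction of $r_n$ as an $\varepsilon_n$-approximation of $B_n$ at scale $\alpha_n$, such a piece is isometric to an ultralimit $\mu-\lim(B_n, x)$ for an appropriate basepoint sequence $x \in \prod B_n$, and conversely every such ultralimit arises from a suitable sequence of copies. The remaining point is the distinguished piece: taking basepoints $x = e \in B_n$, the piece is $\mu-\lim(B_n, e)$, and since $\{B_n\}$ exhausts the proper space $X$ with $e \in B_n$, every bounded subset of $X$ is eventually contained in $B_n$, so the ultralimit stabilizes and $\mu-\lim(B_n, e) \cong X$; this is the content of \cite[Lemma 7.5]{DS}.

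The main obstacle I anticipate is the simultaneous control of all scales. The small cancellation condition must be uniform enough that the van Kampen diagrams remain thin across infinitely many relator lengths, and the lacunarity of $\alpha$ must be strong enough that the ultrafilter genuinely separates the scales, so that no two distinct relator loops contribute to the same piece and no spurious identifications appear in the cone. Verifying that the finite net approximations can be realized by words satisfying the graded small cancellation condition while still converging metrically to $B_n$ (and while the relator subgraphs are isometrically embedded in the limit, so that the induced and ambient metrics agree) — that is, reconciling the combinatorial small cancellation requirement with the metric approximation requirement — is the technical heart of the argument, and is precisely what occupies \cite[Section 7]{DS}.
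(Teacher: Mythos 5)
Your overall strategy is the same as the paper's: this theorem is essentially an import of \cite[Propositions 7.26, 7.27]{DS} (which produce a two-generator group whose asymptotic cones are tree-graded with pieces the ultralimits of a prescribed sequence of rescaled finite graphs) combined with \cite[Lemma 7.5]{DS} (which approximates each $B_n$ by a finite graph built on a $\delta$-snet), and you correctly identify both ingredients and correctly defer the small cancellation technicalities to \cite[Section 7]{DS}. One step, however, would fail as written: choosing \emph{a} relator $r_n$ ``whose associated loop, rescaled by $1/\alpha_n$, approximates $B_n$'' cannot produce the required pieces, because a single relator contributes a cycle to the Cayley graph, and an ultralimit of rescaled circles is a circle, a line or a point --- never, say, a disc, a ball, or a bullseye space. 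In \cite{DS} the object encoded at scale $n$ is an entire finite graph $\Gamma_n$ on a net of $B_n$, with a word assigned to each edge and relators coming from the cycles of $\Gamma_n$; it is the embedded copy of $\Gamma_n$, not a single relator loop, whose rescaled ultralimit is the piece $\mu-\lim (B_n, x)$. So you need many relators per scale, and the small cancellation condition must be imposed on the edge-labelling of the graphs rather than on a single word per scale.

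The second issue is that the only point at which the statement genuinely deviates from \cite{DS} --- and hence the only point the paper actually argues rather than cites --- is that $\{B_n\}$ is a tame exhaustion rather than the sequence of balls of radius $n$ used in \cite{DS}, and your proof does not address this. The paper's observation is that the proof of \cite[Lemma 7.5]{DS} goes through unchanged provided any two points of $B_n$ are joined by a geodesic contained in $B_{2n}$, which is precisely the last clause in the definition of tame exhaustion; that is where this hypothesis is consumed, and some remark to this effect is needed. Relatedly, your claim that each $B_n$ is compact does not follow from the definition of tame exhaustion (nothing there forces $B_n$ to be bounded), yet boundedness is what makes the nets, and hence the approximating graphs, finite; this should either be added as a standing assumption or checked in the intended applications (it does hold for the truncated bullseye spaces of Remark~\ref{exhaust}).
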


A quick word on the proof: Proposition 7.26 and Proposition 7.27 from \cite{DS} give an analogous result where $X$ is substituted by a sequence of graphs, and $\alpha$ is chosen to be ``fast increasing'' with respect to those graphs. In order to conclude it is enough to show that $X$ is well approximated by a certain sequence of graphs (see the discussion in \cite{DS} leading to Lemma 7.5). This last step can be carried out considering certain configurations of points in each $B_n$ called ``$\delta$-snets''. In \cite{DS} $B_n$ is taken to be the ball of radius $n$ around $e$, but the proof goes through unchanged if $\{B_n\}$ is a tame exhaustion (the last condition in the definition of tame exhaustion is used in \cite[Lemma 7.5$-(1)$]{DS}).

\begin{rmrk}
\label{exhaust}
We will apply the theorem letting $X$ be a bullseye space. Also, we will always choose as $B_n$ a ``truncated'' bullseye space, that is a space constructed as in Definition \ref{bullseyedef}, except that we start with a sequence $(a_k)_{k \in \IZ, k\leq n}$ (and we consider circles of radii $2^k$ for $k\leq n+1$ and a suitable segment instead of the full $x-$axis). It is more convenient to choose this tame exhaustion instead of using balls of integer radius especially for the purposes of Remark \ref{groupexamples}.
\end{rmrk}
We will also need the following remark, which follows from the proof of the theorem (the first part of it being just a technicality).
\begin{rmrk}
 $\alpha$ can be chosen to be a subset of $\{2^n\}_{n\in\IN}$. Moreover, we can choose the same sequence $\alpha$ for all bullseye spaces.
\end{rmrk}

The second part holds because for each bullseye space, each $n$ and each $\delta$ there is a uniform bound on the cardinality of a $\delta$-snet in $B_n\subseteq X$, and therefore a bound on the number of edges in the approximating graphs. Also, the properties required for a sequence to be fast increasing only depend on the number of edges of the graphs.
\par
Now, in order to use Theorem \ref{drutugroup} for our purpose, we consider a bullseye space $X$ like the one constructed in Theorem \ref{infmany} having infinitely many non-homeomorphic iterated cones. Fix a sequence $(\alpha_n)$ as above. 

\begin{thm}
\label{mygroup}
There exists a finitely generated group $G$ and a scaling factor $\alpha$, such that for every ultrafilter $\mu$ and every two numbers $i,j \in \IN$ with $i \not= j$ we have that $\Cone^i_\mu(G,e,\alpha)$ is not homeomorphic to $\Cone^j_\mu(G,e,\alpha)$.
\end{thm}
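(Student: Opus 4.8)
The plan is to transfer the bullseye example of Theorem~\ref{infmany} to a group by means of Theorem~\ref{drutugroup}, the point being that the iterated cones of the group reproduce, as their ``central'' pieces, the iterated cones of the bullseye space. I would let $X$ be the bullseye space of Theorem~\ref{infmany}, whose iterated cones $\Cone^i_\mu(X,e,\alpha)$ are pairwise non-homeomorphic, and let $\{B_n\}$ be the tame exhaustion of $X$ by truncated bullseye spaces of Remark~\ref{exhaust}. Applying Theorem~\ref{drutugroup} (together with the remark after it, which lets $\alpha$ be taken inside $\{2^n\}$ as Lemma~\ref{bullseyecone} requires) produces a $2$-generator group $G$ and a scaling factor $\alpha$ such that, for every $\mu$, the cone $\Cone_\mu(G,e,\alpha)$ is tree-graded, its pieces being the ultralimits $\mu-\lim(B_n,x_n)$, and the piece through the basepoint being $\mu-\lim(B_n,e)\cong X$.

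The first main step is an induction on $i\geq 1$ showing that $\Cone^i_\mu(G,e,\alpha)$ is tree-graded and that each of its pieces is either isometric to $\Cone^{i-1}_\mu(X,e,\alpha)$ or not homeomorphic to any bullseye space. For $i=1$ this is the statement above: the pieces $\mu-\lim(B_n,x_n)$ with $d(x_n,e)$ $\mu$-bounded are all isometric to $X=\Cone^0_\mu(X,e,\alpha)$, and the rest come from $x_n\to\infty$. For the inductive step, since $\Cone^i_\mu(G,e,\alpha)$ is tree-graded, Lemma~\ref{ittreegraded} makes it asymptotically tree-graded with respect to the same pieces, so $\Cone^{i+1}_\mu(G,e,\alpha)=\Cone_\mu(\Cone^i_\mu(G,e,\alpha),\hat e,\alpha)$ is again tree-graded, its pieces being the cones $\Cone_\mu((P_n),\hat e,\alpha)$ of sequences of pieces. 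The cone of the bullseye type $\Cone^{i-1}_\mu(X,e,\alpha)$ (with basepoint the center) is $\Cone^i_\mu(X,e,\alpha)$, which by Lemma~\ref{bullseyecone} is again a bullseye space; so the new central piece, and indeed every bullseye piece, is $\Cone^i_\mu(X,e,\alpha)$, while the remaining pieces are cones of the non-bullseye pieces of the previous level, which the last step must show are still not bullseye spaces. By the construction in Theorem~\ref{infmany}, $\Cone^{i-1}_\mu(X,e,\alpha)$ has asymptotic density $1/i$, and these values are distinct for distinct $i$.

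Next I would make the decomposition topological. Being a global cut-point (Definition~\ref{cutp}) is a homeomorphism invariant, so the minimal tree-graded structure of Lemma~\ref{cutpoints} is canonical and any homeomorphism $\Cone^i_\mu(G,e,\alpha)\to\Cone^j_\mu(G,e,\alpha)$ carries pieces onto pieces. A bullseye space has no global cut-point, so the central piece $\Cone^{i-1}_\mu(X,e,\alpha)$ lies in the minimal structure and is sent homeomorphically onto a piece of $\Cone^j_\mu(G,e,\alpha)$. By the inductive claim the only pieces of $\Cone^j_\mu(G,e,\alpha)$ homeomorphic to bullseye spaces are those isometric to $\Cone^{j-1}_\mu(X,e,\alpha)$, of density $1/j$. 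Since homeomorphic bullseye spaces have the same underlying sequence up to shift and $\adn$ is shift-invariant (Lemma~\ref{asdense}), this forces $1/i=1/j$, i.e.\ $i=j$. The case $i=0$ is immediate, since $\Cone^0_\mu(G,e,\alpha)=G$ is a countable discrete space while every $\Cone^j_\mu(G,e,\alpha)$ with $j\geq 1$ is a connected uncountable space.

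The step I expect to be the real obstacle is the negative half of the inductive claim: showing that the remaining pieces—the iterated cones of the ultralimits $\mu-\lim(B_n,x_n)$ with $x_n\to\infty$—are never homeomorphic to a bullseye space. I would argue that a bullseye space is recognized topologically by its center, a point every neighborhood of which contains essential loops (the circles of radius $2^k$, $k\to-\infty$) of arbitrarily small diameter. A basepoint escaping to infinity only ever meets circles of diverging radius, which unroll to lines in the ultralimit, so the limit contains no point at which arbitrarily small essential loops accumulate; and passing to further cones preserves this, as these pieces remain locally line-, plane-, ball- or tree-like. Hence no such piece is homeomorphic to a bullseye space, which closes the induction and completes the proof.
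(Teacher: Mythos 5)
Your proposal follows the paper's proof essentially step for step: transfer $X$ via Theorem~\ref{drutugroup}, pass to the minimal tree-graded structure of Lemma~\ref{cutpoints} so that any homeomorphism must match pieces, identify the iterated cone of $X$ as the unique bullseye piece at each stage (with Lemma~\ref{ittreegraded} handling the iteration), and separate these pieces by asymptotic density via Lemma~\ref{asdense}. The only cosmetic difference is the invariant used to rule out the pieces based at escaping basepoints: you use the absence of arbitrarily small essential loops accumulating at a point, whereas the paper observes that such pieces cannot contain both $2$-dimensional and $3$-dimensional parts; both work equally well.
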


\begin{proof}
Take $X$ as above from Theorem \ref{infmany}, using the sequence $\alpha$ to define variable and fixed parts. Let $G$ be a group as in Theorem \ref{drutugroup}. Fix an ultrafilter $\mu$ and set $C := \Cone_\mu(G,e,\alpha)$.\\
Consider on $C$ the tree-graded structure $\mathcal{P}$ given by Lemma \ref{cutpoints}. As $X$ does not contain cut-points and it is a piece in the tree-graded structure given by Theorem \ref{drutugroup}, it appears as a piece in $\mathcal{P}$. However, in $C$ we also have other pieces coming from ultralimits of $B_n$ with varying sequences of basepoints. Observe that for every possible base-point $(x_n) \in \prod B_n$ we are in one of two cases: either $\mu-\lim d(x_n,e)/\alpha_n<+\infty$ (in which case the cone with basepoint $x$ is $X$) or $\mu-\lim d(x_n, e)/\alpha_n=+\infty$. But in the second case the cone with respect to this $x$ can only contain 2-dimensional or 3-dimensional parts, but not both. In particular such a piece cannot be homeomorphic to any bullseye space, and nor can any rescaled ultralimit of those.\\
In particular, we have that $X$ can be topologically characterized as the only bullseye space appearing as a piece in the minimal tree-graded structure on $C$.\\
Now we iterate the process, taking the asymptotic cone of $C$, say $C'$. By Lemma \ref{ittreegraded}, $C'$ will again be tree-graded and the pieces will be rescaled ultralimits of pieces. In particular the cone of $X$ will occur as the only bullseye piece. Since all iterated cones of $X$ are pairwise non-homeomorphic, in particular the iterated cones of $G$ will be pairwise non-homeomorphic as well.
\end{proof}

\section{Some variations}
\label{variations}
In this section we show how to modify some of the constructions we performed in order to obtain some further examples. Many proofs are obtained by suitably modifying the proof of Theorem~\ref{infmany} and use the same notation.

\subsection{A space with uncountably many cones}

In this subsection we will show that ``most'' bullseye spaces have uncountably many pairwise non-homeomorphic asymptotic cones.

\begin{defi}
Let $(a_k)_{k \in \IZ}$ be a sequence in $\{0,1\}^\IZ$. This sequence is called {\bf rich} if it contains every finite sequence of the numbers 0 and 1 in its positive part $(a_k)_{k \in \IN}$.
\end{defi}

Clearly, rich sequences exist since there are only countably many finite sequences. Also note that a rich sequence will contain every given finite sequence infinitely many times, since any given finite sequence can be extended in infinitely many ways to different longer finite sequences.

\begin{prop}
\label{uncountcones}
If $X$ is a bullseye space such that its associated sequence $(a_k)$ is rich then for each ultrafilter $\mu$ there exists a set $\mathcal{S}$ of scaling factors with $|\mathcal{S}| = 2^{\aleph_0}$ such that for every $\alpha, \alpha' \in \mathcal{S}$ with $\alpha \not= \alpha'$ the spaces $\Cone_\mu(X,e,\alpha)$ and $\Cone_{\mu}(X,e,\alpha')$ are not homeomorphic.
\end{prop}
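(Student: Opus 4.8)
The plan is to use the correspondence between cones of bullseye spaces and shifts of their defining sequences provided by Lemma~\ref{bullseyecone}, and to exploit richness to realize a prescribed cone-sequence by a suitable scaling factor. Recall from Lemma~\ref{bullseyecone} that if $\alpha$ corresponds to the set $\{2^{m_0} < 2^{m_1} < \cdots\}$ of powers of $2$, then $\Cone_\mu(X,e,\alpha)$ is isometric to the bullseye space whose sequence is $b_k = \lim_\mu a_{m_n + k}$. Recall moreover, from the proof of Theorem~\ref{infmany}, that two bullseye spaces are homeomorphic only if their sequences agree up to a shift, and that by Lemma~\ref{asdense} the asymptotic density $\adn$ is invariant under shifts. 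Hence two cones $\Cone_\mu(X,e,\alpha)$ and $\Cone_\mu(X,e,\alpha')$ whose cone-sequences have different asymptotic density cannot be homeomorphic. This reduces the statement to producing $2^{\aleph_0}$ scaling factors whose cone-sequences have pairwise distinct densities.

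The key construction is to realize an arbitrary prescribed bi-infinite sequence $(c_k)_{k \in \IZ}$ as a cone-sequence. First I would choose, inductively, exponents $m_0 < m_1 < \cdots$ as follows. Since $(a_k)$ is rich, the finite word $(c_{-n}, \dots, c_n)$ occurs in the positive part of $(a_k)$, and in fact infinitely often; so I can pick $m_n > m_{n-1}$ such that $(a_{m_n - n}, \dots, a_{m_n + n}) = (c_{-n}, \dots, c_n)$ (this automatically forces $m_n \geq n$, so the window lies in the positive part). Then $m_n \to \infty$, so $\alpha := [2^{m_n}]$ is a legitimate scaling factor. For each fixed $k$ we have $a_{m_n + k} = c_k$ for all $n \geq |k|$, i.e. for all but finitely many $n$; since $\mu$ is non-principal it contains every cofinite set, and therefore $\lim_\mu a_{m_n + k} = c_k$. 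By Lemma~\ref{bullseyecone} the cone-sequence of $\alpha$ is exactly $(c_k)$, so $\Cone_\mu(X,e,\alpha)$ is the bullseye space associated to $(c_k)$.

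To finish, for each $d \in [0,1]$ I would fix a bi-infinite $0$--$1$ sequence $(c^{(d)}_k)$ with $\adn(c^{(d)}) = d$ (for instance a sequence in which $1$'s occur with limiting frequency $d$), let $\alpha^{(d)}$ be the scaling factor produced above, and set $\mathcal{S} := \{\alpha^{(d)} : d \in [0,1]\}$. The cone of $X$ along $\alpha^{(d)}$ then has density $d$. If $d \neq d'$ the two cones have cone-sequences of distinct density, hence are non-homeomorphic; in particular $\alpha^{(d)} \neq \alpha^{(d')}$, so $|\mathcal{S}| = 2^{\aleph_0}$ (the reverse bound is automatic, since there are only $2^{\aleph_0}$ sequences of reals). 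The main point to get right is the realization step: one must check that matching ever-longer symmetric windows of $(a)$ to $(c)$, together with the non-principality of $\mu$, forces the ultralimit $b_k$ to equal $c_k$ for every $k$ simultaneously; everything else is bookkeeping. The richness hypothesis is exactly what makes the window-matching possible, and since the agreement is arranged cofinitely the resulting $\mathcal{S}$ in fact works for every non-principal ultrafilter at once.
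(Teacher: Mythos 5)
Your argument is correct and follows essentially the same route as the paper: use richness to find, for each target density $t\in[0,1]$, a sequence of indices at which ever-longer symmetric windows of $(a_k)$ match a prescribed sequence of density $t$, then invoke Lemma~\ref{bullseyecone} and the shift-invariance of asymptotic density (Lemma~\ref{asdense}) to conclude the resulting cones are pairwise non-homeomorphic. Your version is in fact slightly more careful than the paper's on two minor points: you explicitly take the scaling factor to be $[2^{m_n}]$ so that Lemma~\ref{bullseyecone} applies, and you spell out why non-principality of $\mu$ forces the ultralimit to equal the prescribed sequence.
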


\begin{proof}
For every $t \in [0,1]$ choose a sequence $(a_k^{(t)})_{k \in \IZ}$ with $\adn(a_k^{(t)}) = t$.\\
Fix $t \in [0,1]$. We can construct a set of indices $i_1^{(t)} <  i_2^{(t)} < i_3^{(t)} <  \ldots$ in $\IN$, such that for all $n \in \IN$ and $l \in \IZ$ with $-n \leq l \leq n$ we have
\[ a_{i_n^{(t)} + l} = a_l^{(t)}.\]
This is clearly possible since $(a_k)$ is rich and these finite sequences all occur infinitely often. Let $\alpha_t$ be be the sequence $i_1^{(t)}, i_2^{(t)}, \ldots$. By construction, it follows that the space $\Cone_{\mu}(X,e,\alpha_t)$ is again a bullseye space with associated sequence $(a_k^{(t)})$. Set $\mathcal{S} := \{ \alpha_t : t \in [0,1] \}$. Since all the resulting sequences have different asymptotic densities, the claim follows.
\end{proof}

\begin{rmrk}
In \cite[Theorem 1.10]{KSTT}, the authors showed that $2^{\aleph_0}$ is the maximal number of asymptotic cones a finitely generated group can have, provided the continuum hypothesis $(CH)$ is true. However, their proof does not use the group structure at all and works exactly the same way for arbitrary metric spaces of cardinality at most $2^{\aleph_0}$. So, even in the more general context of arbitrary metric spaces there can only be $2^{\aleph_0}$ different cones, provided $(CH)$ holds.
\end{rmrk}

\subsection{Periodic iterated cones}

\begin{thm}
\label{periodiccones}
For any positive integer $m$ there exists a metric space $X$ with basepoint $e$ and a scaling factor $\alpha$, such that for any ultrafilter $\mu$ and any natural numbers $i,j$ the iterated cones $\Cone_\mu^i(X,e,\alpha)$ and $\Cone_\mu^j(X,e,\alpha)$ are homeomorphic if and only if $i \equiv j\ \mbox{mod }m$.
\end{thm}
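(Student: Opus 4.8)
The plan is to adapt the proof of Theorem~\ref{infmany}, replacing the infinite tower of $0$–$1$ sequences by a cycle of length $m$. First I would fix a thin set $A=\{\alpha_0<\alpha_1<\cdots\}\subseteq\{2^n:n\in\IN\}$ with $\alpha_0\gg 0$ and pairwise disjoint windows $[\alpha_n-n,\alpha_n+n]$, set $\alpha:=[\alpha_n]$, and define the variable and fixed parts of a sequence exactly as in Theorem~\ref{infmany}. Then I would pick $m$ sequences $(a_k^{(0)}),\dots,(a_k^{(m-1)})$ of pairwise distinct asymptotic densities, say $\adn(a_k^{(i)})=1/(i+1)$; by thinness of $A$ these densities are unaffected by any later modification of the variable part.

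The heart of the argument is to modify the variable parts so that for every $i$ the asymptotic cone of the bullseye space associated to $(a_k^{(i)})$ is the bullseye space associated to $(a_k^{(i+1\ \mathrm{mod}\ m)})$; the only new feature compared to Theorem~\ref{infmany} is that the chain now closes up, the cone of $(a_k^{(m-1)})$ returning $(a_k^{(0)})$. By Lemma~\ref{bullseyecone} it is enough to plant the value $a_k^{(i+1\ \mathrm{mod}\ m)}$ at position $\alpha_n+k$ of $(a_k^{(i)})$ for all $n$ and all $|k|\le n$, so that $\lim_\mu a^{(i)}_{\alpha_n+k}=a_k^{(i+1\ \mathrm{mod}\ m)}$. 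As before, a planting performed at level $i$ has to be reflected at level $i-1$, and one iterates this around the cycle.

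The one point that genuinely needs checking — and the only obstacle I anticipate — is that this recursive planting converges, i.e. that the limit sequences $(a_k^{[i]})$ are well defined even though the cycle is now traversed infinitely often. This follows by the same mechanism as in Theorem~\ref{infmany}: tracing a fixed entry at position $k$ of $(a_k^{(i)})$ back through the planting, each step passes to the next level of the cycle and replaces the index $k_j$ by $k_{j+1}=k_j-\alpha_{n_j}$ with $|k_{j+1}|\le n_j\ll\alpha_{n_j}$, so the absolute value of the index drops sharply at every step. Because $\alpha_0\gg 0$, after finitely many steps the index lands in the fixed part and the recursion terminates, regardless of how the cyclic level index behaves; hence every fixed entry is altered only finitely often and the limit exists.

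Finally I would let $X$ be the bullseye space associated to $(a_k^{[0]})$. Iterating Lemma~\ref{bullseyecone} with the same $\mu$ and $\alpha$ shows that $\Cone^i_\mu(X,e,\alpha)$ is isometric to the bullseye space associated to $(a_k^{[i\ \mathrm{mod}\ m]})$ for every $i$; in particular $\Cone^m_\mu(X,e,\alpha)\cong X$. Two such cones are homeomorphic precisely when their sequences agree up to a shift, exactly as argued in the proof of Theorem~\ref{infmany}. Since the $m$ sequences have pairwise distinct asymptotic densities and shifts preserve density by Lemma~\ref{asdense}, this occurs if and only if $i\equiv j\ \mathrm{mod}\ m$, which yields the stated equivalence (and rules out any smaller period).
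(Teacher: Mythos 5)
Your proposal is correct and follows essentially the same route as the paper: the paper simply reruns the construction of Theorem~\ref{infmany} starting from an infinite family of sequences satisfying $a^{(i)}_k=a^{(i+m)}_k$ with asymptotic densities distinguishing residues mod $m$, which is exactly your length-$m$ cycle unrolled. Your explicit check that the cyclic planting converges (each fixed entry being traced back through windows of strictly decreasing index until it lands in the fixed part) is a correct fleshing-out of the paper's one-line assertion that the limit sequences $a^{[i]}_k$ inherit the periodicity and density properties.
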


\begin{proof}
 We can proceed as in the proof of Theorem~\ref{infmany}, starting off with sequences $(a^{(i)}_k)$ such that
\begin{itemize}
 \item $a^{(i)}_k=a^{(i+m)}_k$,
 \item if $i \not\equiv j\ \mbox{mod }m$.
\end{itemize}

It is easily seen that the sequences $a^{[i]}_k$ satisfy the same properties.

\end{proof}

\begin{rmrk}
\label{groupexamples}
Using the same techniques as in the previous section it is possible, starting from the example above, to construct a group $G$ with non-trivially periodic iterated cones, meaning that $\Cone_\mu^i(G,e,\alpha)$ and $\Cone_\mu^j(G,e,\alpha)$, for $i,j\geq 1$, are homeomorphic if and only if $i \equiv j\ \mbox{mod }m$. The extra ingredient we need is \cite[Theorem 0.6]{Si2}, which in our context gives that $\Cone_\mu^i(G,e,\alpha)$ is (bilipschitz) homeomorphic to $\Cone_\mu^j(G,e,\alpha)$ ($i,j\geq 1$) if and only if their pieces in the minimal tree-graded structure are bilipschitz homeomorphic. It is easily seen that, when $G$ is constructed as in the previous section using the bullseye space $X$ as in the theorem above, each such piece is bilipschitz homeomorphic to either $\IR^2$, $\IR^3$, a half-plane in $\IR^2$, a half-space in $\IR^3$ or an iterated cone of $X$. In particular, $\Cone_\mu^i(G,e,\alpha)$ and $\Cone_\mu^j(G,e,\alpha)$ ($i,j\geq 1$) are homeomorphic if and only if $\Cone_\mu^i(X,e,\alpha)$ and $\Cone_\mu^j(X,e,\alpha)$ are homeomorphic.
\end{rmrk}

\begin{rmrk}
With a similar method it is possible to construct spaces $X$ whose first, say, $k$ iterated asymptotic cones are pairwise non-homeomorphic, and the following ones are periodic.
\end{rmrk}

\subsection{Changing scaling factor/ultrafilter}

Recall that in the definition of iterated cone we fixed an ultrafilter and a scaling factor. Allowing of them to vary gives new possible behaviors. Indeed, we obtain the maximal possible range of behaviors by just allowing one of them to vary.

\begin{defi}
Fix a sequence of non-principal ultrafilters $(\mu_n)_{n\in\IN}$ on $\IN$ and a sequence of scaling factors $(\alpha_n)_{n\in\IN}$.
We set $\Cone^0_{(\mu_n)}(X,e,(\alpha_n)) := X$ and for $i \in \IN$ set
\[ \Cone^{i+1}_{(\mu_n)}(X,e,(\alpha_n)) := \Cone_{\mu_{i+1}}\big(\Cone^i_{\mu_i}(X,e,(\alpha_n)),\hat{e},\alpha_{i+1} \big).\] 
\end{defi}

We recover the definition of ``regular'' iterated cones by setting $\mu_n=\mu$ and $\alpha_n=\alpha$ for each $n$. We will denote $\Cone^{i}_{(\mu_n)}(X,e,(\alpha_n))$ by $\Cone^{i}_{\mu}(X,e,(\alpha_n))$ if $(\mu_n)$ is the sequence with constant value $\mu$.

\begin{thm}
\label{asmanyasyouwish}
 Let $(a_n)_{n\in\IN}$ be any sequence of natural numbers.
There exists a metric space $X$ with basepoint $e$ and a sequence of scaling factors $(\alpha_n)$, such that for any ultrafilter $\mu$ we have that the iterated cones $\Cone_\mu^i(X,e,(\alpha_n))$ and $\Cone_\mu^j(X,e,(\alpha_n))$ are homeomorphic if and only if $a_i=a_j$.
\end{thm}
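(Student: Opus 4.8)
The plan is to exploit the extra freedom of the varying scaling factor so that the $i$-th iterated cone is \emph{exactly} a bullseye space whose associated sequence depends only on the value $a_i$. Concretely, for each natural number $v$ occurring in the sequence $(a_n)$ I would fix once and for all a \textbf{rich} sequence $\sigma^{(v)}\in\{0,1\}^\IZ$ whose asymptotic density $d_v:=\adn(\sigma^{(v)})$ takes pairwise distinct values as $v$ varies (say $d_v\in(0,1)$, all different). Such sequences exist: richness can be arranged by inserting every finite $0$--$1$ pattern infinitely often at a very sparse set of positions, and since that set can be taken of density $0$, the background can be chosen so as to pin the density at any prescribed $d_v$. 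I then let $X$ be the bullseye space associated to $\sigma^{(a_0)}$, with its basepoint $e$.

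The scaling factors are chosen inductively. Suppose I have arranged that $\Cone_\mu^i(X,e,(\alpha_n))$ is the bullseye space of $\sigma^{(a_i)}$. Since $\sigma^{(a_i)}$ is rich, it contains, for every $n$, the finite pattern $\sigma^{(a_{i+1})}_{-n},\dots,\sigma^{(a_{i+1})}_{n}$ infinitely often in its positive part; I pick an increasing, unbounded sequence of positions $\alpha_{i+1,0}<\alpha_{i+1,1}<\cdots$ (inside $\{2^m:m\in\IN\}$ as in Lemma~\ref{bullseyecone}) such that the window of $\sigma^{(a_i)}$ of radius $n$ centred at $\alpha_{i+1,n}$ agrees with $\sigma^{(a_{i+1})}_{[-n,n]}$, and set $\alpha_{i+1}:=[\alpha_{i+1,n}]$. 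For each fixed $k$ one then has $\sigma^{(a_i)}_{\alpha_{i+1,n}+k}=\sigma^{(a_{i+1})}_k$ for all $n\geq|k|$, and since $\{n\geq|k|\}$ is cofinite and hence $\mu$-large, Lemma~\ref{bullseyecone} gives that the next cone is the bullseye space of $\big(\lim_\mu \sigma^{(a_i)}_{\alpha_{i+1,n}+k}\big)_{k}=\sigma^{(a_{i+1})}$. By induction $\Cone_\mu^i(X,e,(\alpha_n))$ is the bullseye space of $\sigma^{(a_i)}$ for every $i$; note that the induction closes precisely because each $\sigma^{(v)}$ was built to be rich, so the target of the next step is again rich.

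It remains to read off the equivalences. If $a_i=a_j$ then $\sigma^{(a_i)}=\sigma^{(a_j)}$, so the two iterated cones are literally the same bullseye space and in particular homeomorphic. If $a_i\neq a_j$ then $\adn(\sigma^{(a_i)})=d_{a_i}\neq d_{a_j}=\adn(\sigma^{(a_j)})$; since the asymptotic density is invariant under shifts (Lemma~\ref{asdense}), the two sequences are not shift-equivalent, and hence by the homeomorphism criterion for bullseye spaces used in Theorem~\ref{infmany} the cones are not homeomorphic. This yields exactly the desired biconditional.

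The main obstacle, and the reason a direct imitation of Theorem~\ref{infmany} does not suffice, is the \textbf{``if''} direction: for an arbitrary sequence $(a_n)$ one must produce genuinely identical (not merely equidense) spaces at levels $i$ and $j$ whenever $a_i=a_j$. In the fixed-scaling-factor construction the sequence attached to each level recursively encodes its entire future through its variable part, so two levels with $a_i=a_j$ but different continuations carry different sequences and need not be homeomorphic. Allowing the scaling factor to vary removes this coupling: the sequence at level $i$ can be fixed in advance as $\sigma^{(a_i)}$, and the task of ``locating the next level'' is delegated to the choice of $\alpha_{i+1}$, which is possible precisely because each $\sigma^{(v)}$ is rich. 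The only remaining point requiring a little care is the simultaneous realization of richness and of a prescribed asymptotic density, which is handled by the sparse-insertion construction described above.
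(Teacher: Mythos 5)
Your proposal is correct and follows essentially the same route as the paper: you fix rich $0$--$1$ sequences with pairwise distinct asymptotic densities indexed by the values of $(a_n)$ (the paper's $\beta_i$ are exactly your $\sigma^{(a_i)}$), start from the bullseye of the first one, and use richness to choose each successive scaling factor so that the next iterated cone is the bullseye of the next prescribed sequence, then separate the cones by asymptotic density. You merely spell out the inductive step (via window-matching and Lemma~\ref{bullseyecone}) that the paper leaves implicit, and your choice of scaling factors is visibly independent of the ultrafilter, as the quantifiers in the statement require.
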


\begin{proof}
 Choose sequences $\{\beta_i=(b^{(i)}_n)\}_{i\in\IN}$ with values in $\{0,1\}$ with the following properties:
\begin{enumerate}
 \item each sequence $\beta_i$ is rich,
 \item if $a_i\neq a_j$ then $\beta_i$ and $\beta_j$ have different asymptotic densities,
 \item if $a_i=a_j$ then $\beta_i=\beta_j$.
\end{enumerate}

 We can choose $X$ to be the bullseye whose associated sequence is $\beta_0$. It is then possible, given $\mu$, to choose inductively scaling factors $\alpha_i$ so that $\Cone^{i}_{\mu}(X,e,(\alpha_n))$ is the bullseye associated to the sequence $\beta_i$.
\end{proof}

\subsection{Transfinite iteration}
There is a natural definition of $\Cone_\mu^\lambda(X,e,\alpha)$, where $\lambda$ is an ordinal of the form $\omega\cdot k+n$, $k,n\in<\IN$, namely we set
$$\Cone_\mu^{\omega\cdot (k+1)}(X,e,\alpha)=\mu-\lim \Cone_\mu^{\omega\cdot k+n}(X,e,\alpha).$$

\begin{thm}
\label{transf}
For each positive integer $k$ there exists a metric space $X$ with basepoint $e$ and a scaling factor $\alpha$, such that for any ultrafilter $\mu$ and distinct ordinals $\lambda_1,\lambda_2<\omega\cdot k$ the iterated cones $\Cone_\mu^{\lambda_1}(X,e,\alpha)$ and $\Cone_\mu^{\lambda_2}(X,e,\alpha)$ are not homeomorphic.
\end{thm}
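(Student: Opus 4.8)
The plan is to imitate the proof of Theorem~\ref{infmany}, now assigning a distinct target asymptotic density $d_\lambda$ to each of the countably many ordinals $\lambda<\omega\cdot k$ (say via any injection of these ordinals into $(0,1)$) and building a single bullseye space $X$ whose $\lambda$-th iterated cone is a bullseye space of density $d_\lambda$. Since homeomorphic bullseye spaces share the same associated sequence up to a shift and $\adn$ is shift-invariant by Lemma~\ref{asdense}, distinctness of the $d_\lambda$ forces the cones to be pairwise non-homeomorphic. As in Theorem~\ref{infmany} I fix a thin set $A=\{\alpha_0<\alpha_1<\cdots\}\subseteq\{2^n\}$, put $\alpha=[\alpha_n]$, and call the positions $\alpha_m+l$ with $l\in[-m,m]$ the variable part; thinness of $A$ guarantees that modifying a sequence on the variable part changes no asymptotic density.

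Write $\lambda<\omega\cdot k$ as $\omega\cdot j+n$ with $0\le j<k$, $n\in\IN$, and let $a^{(\omega j+n)}$ be the target sequence of density $d_{\omega j+n}$. Within a single block the recipe is exactly that of Theorem~\ref{infmany}: by Lemma~\ref{bullseyecone} the successor cone sends a sequence $(a_l)$ to $\big(\mu-\lim_m a_{\alpha_m+l}\big)_l$, so I fix the variable part of $a^{(\omega j+n)}$ to make its cone equal $a^{(\omega j+n+1)}$, realizing all the block densities while touching only variable positions. The genuinely new step is the limit ordinal $\omega\cdot(j+1)$, where the cone is the ultralimit with central basepoint, hence again a bullseye space, now with associated sequence the coordinatewise limit $\big(\mu-\lim_n a^{(\omega j+n)}_l\big)_l$, exactly as in Lemma~\ref{bullseyecone}. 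Its density is read off the fixed part alone, so I can prescribe it: imposing $a^{(\omega j+n)}_l=a^{(\omega(j+1))}_l$ on fixed positions with $|l|\le r_n$ for some $r_n\to\infty$ forces the coordinatewise limit to be $a^{(\omega(j+1))}$, while the free positions $|l|>r_n$ still carry density $d_{\omega j+n}$ for each $n$ and the limit carries $d_{\omega(j+1)}$.

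The one real obstacle is that the obvious encoding makes $a^{(\omega(j+1))}$ a cone-fixed point, which would collapse the whole next block to a single density. Indeed, if the variable part of every $a^{(\omega j+n)}$ encoded $a^{(\omega j+n+1)}$ for all large $m$, then at a variable position $a^{(\omega(j+1))}_{\alpha_m+l}=\mu-\lim_n a^{(\omega j+n+1)}_l=a^{(\omega(j+1))}_l$, whence $\Cone_\mu$ fixes $a^{(\omega(j+1))}$. The way out is to exploit the fact that iterated $\mu$-limits do not commute: the double limit over the scaling index $m$ and the iteration index $n$ depends on their order, reflecting the non-commutativity of products of ultrafilters recorded in the preliminaries. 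Concretely I stagger the variable part by setting
\[
a^{(\omega j+n)}_{\alpha_m+l}=\begin{cases}a^{(\omega j+n+1)}_l & \text{if }m\ge n,\\ T_l & \text{if }m<n,\end{cases}
\]
where $T:=a^{(\omega(j+1)+1)}$ is the prescribed first successor of the next block. For fixed $n$ and $m\to\infty$ the finitely many terms with $m<n$ are $\mu$-negligible, so the successor cone still returns $a^{(\omega j+n+1)}$; but for fixed $m$ and $n\to\infty$ one gets $T_l$, so $a^{(\omega(j+1))}_{\alpha_m+l}=T_l$ and hence $\Cone_\mu\big(a^{(\omega(j+1))}\big)=T=a^{(\omega(j+1)+1)}$, a sequence of the fresh density $d_{\omega(j+1)+1}$. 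This reopens the next block and drives the induction through all $k-1$ limit transitions.

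It remains to collect these requirements into one base sequence $a^{[0]}$ for $X$. Each condition constrains $a^{[0]}$ only on nested, pairwise disjoint families of variable positions (disjoint because $A$ is thin and $\alpha_0\gg0$), so every fixed entry is altered only finitely often and $a^{[0]}$ is well defined, just as in Theorem~\ref{infmany}. I expect the staggered limit encoding to be the technical heart: one must verify that reversing the order of the two $\mu$-limits really produces $\Cone_\mu\big(a^{(\omega(j+1))}\big)=T$ instead of the fixed point, while leaving all successor cones inside the block intact. The density bookkeeping and the convergence of $a^{[0]}$ are then routine adaptations of the arguments already used for Theorem~\ref{infmany}.
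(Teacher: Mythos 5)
Your construction is correct, but it is packaged quite differently from the paper's. The paper argues by induction on $k$: the case $k=1$ is Theorem~\ref{infmany} itself together with the observation that $\Cone_\mu^{\omega}$ of such a space can be prescribed to be any given bullseye space, by requiring the sequences $(a_k^{(i)})$ to coincide with the target sequence on larger and larger blocks centered at $0$ and leaving that part untouched; the inductive step then produces, from a space $X$ working for $k$, a new space $X'$ with $\Cone_\mu^{\omega}(X',e,\alpha)\cong X$ and with fresh densities for its finite iterated cones, so that $\Cone_\mu^{\omega+\lambda}(X',e,\alpha)\cong\Cone_\mu^{\lambda}(X,e,\alpha)$ for all $\lambda<\omega\cdot k$. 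You instead carry out one global construction over all of $\omega\cdot k$, which forces you to confront explicitly the fixed-point phenomenon at limit ordinals; your staggered encoding ($m\ge n$ versus $m<n$) resolves it correctly. It is worth observing that the same issue is present but handled implicitly in the paper's version: there the central-block agreement is with the \emph{fully encoded} sequence of the target space, whose variable part already encodes its own (distinct) successor cone, and this is precisely what your assignment of $T_l$ at variable positions with $m<n$ reproduces. Both arguments rest on the same invariant (asymptotic density, shift-invariant by Lemma~\ref{asdense}) and the same two mechanisms (variable-part encoding for successor cones via Lemma~\ref{bullseyecone}, and stabilization on growing central blocks for limit cones); the paper's inductive packaging is shorter, while your direct version makes the one genuinely delicate point visible. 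As you note, one must order the construction so that the next block's first successor is available when encoding the current block's limit, e.g.\ by building the blocks backwards from $j=k-1$; with that proviso your argument goes through.
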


\begin{proof}
 As usual, $X$ will be a bullseye space. In order to set the case $k=1$, notice that we can arrange that $\Cone_\mu^{\omega}(X,e,\alpha)$, where $X$ is the the bullseye we constructed in the proof of Theorem~\ref{infmany}, to be any bullseye we want. In fact, it is enough to require the sequences $(a_k^{(i)})$ we started from to have larger and larger subsequences centered in $0$ to coincide with a given sequence, and not modify that part of the sequence (this does not affect asymptotic cones of the associated bullseye spaces).
\par
This not just settles the case $k=1$, but also provides the inductive step needed: given a bullseye $X$ with required property for a certain $k$, we can find another bullseye space $X'$ such that $\Cone_\mu^{\omega}(X',e,\alpha)$ is $X$, and we can also arrange that $\Cone_\mu^{i}(X',e,\alpha)$ is not homeomorphic to $\Cone_\mu^{\lambda}(X,e,\alpha)$ for $i$ finite and $\lambda<\omega \cdot k$.

\end{proof}

\subsection{Only countably many asymptotic cones}

In this subsection we will provide an example of a bullseye space which, despite having non-homeomorphic asymptotic cones, has ``only'' countably many distinct asymptotic cones.

\begin{thm}
\label{onlycount}
There exists a metric space $X$ with basepoint $e$ and a scaling factor $\alpha$ such that for any ultrafilter $\mu$ the following hold:
\begin{itemize}
 \item for any natural numbers $i,j$ with $i \not= j$ the iterated cones $\Cone_\mu^i(X,e,\alpha)$ and $\Cone_\mu^j(X,e,\alpha)$ are not homeomorphic,
 \item $X$ has countably many asymptotic cones up to homeomorphism.
\end{itemize}

\end{thm}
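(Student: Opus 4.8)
The plan is to recast the problem in terms of symbolic dynamics. By Lemma~\ref{bullseyecone}, every asymptotic cone of the bullseye space $X=X_{(a_k)}$ is again a bullseye space $X_{(b_k)}$ with $b_k=\lim_\mu a_{\alpha_n+k}$ for some non-principal ultrafilter $\mu$ and some increasing sequence $(\alpha_n)\subseteq\{2^n:n\in\IN\}$; and, as already used in the proof of Theorem~\ref{infmany}, two bullseye spaces are homeomorphic exactly when their sequences agree up to a shift. Hence the asymptotic cones of $X$ up to homeomorphism are in bijection with the shift-orbits of the $\omega$-limit set
\[
\Omega(a):=\bigl\{\,(b_k)_{k\in\IZ}: b_k=\textstyle\lim_\mu a_{\alpha_n+k}\ \text{for some }\mu\text{ and some }(\alpha_n)\uparrow\infty\,\bigr\}.
\]
Since each iterated cone is itself a cone of $X$ (Lemma~\ref{coneofcone}), the iterated cones already contribute infinitely many orbits to $\Omega(a)$; the task is therefore to build $a$ so that $\Omega(a)$ has \emph{only} countably many shift-orbits, while the iterated cones still realize infinitely many distinct asymptotic densities.

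First I would run the construction of Theorem~\ref{infmany} with a single change: choose the profile sequences to be \emph{periodic}, namely $a^{(i)}=(1\,0^{i})^{\infty}$, so that $\adn(a^{(i)})=1/(i+1)$ and the building blocks are as rigid as possible. As in Theorem~\ref{infmany}, fix a thin set $A=\{\alpha_0<\alpha_1<\cdots\}$, call the entries indexed by $[\alpha_n-n,\alpha_n+n]$ the variable part, and modify the variable parts nestedly so that the resulting self-similar sequences $(a^{[i]}_k)$ satisfy $\lim_\mu a^{[i]}_{\alpha_n+k}=a^{[i+1]}_k$, that is, $\Cone_\mu(X_{a^{[i]}},\hat e,\alpha)=X_{a^{[i+1]}}$. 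Set $X:=X_{a^{[0]}}$. Then $\Cone^i_\mu(X,e,\alpha)=X_{a^{[i]}}$, and since the modifications live on the thin set $A$, Lemma~\ref{asdense} gives $\adn(a^{[i]})=\adn(a^{(i)})=1/(i+1)$. These densities are pairwise distinct, so the iterated cones are pairwise non-homeomorphic, establishing the first bullet.

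The remaining and principal task is to show that $\Omega(a^{[0]})$ meets only countably many shift-orbits, and here two features of the construction are decisive. First, because $A$ is thin the inserted copies of different levels sit at geometrically separated scales, so any window of bounded width meets the boundaries of only finitely many levels; passing to a $\mu$-limit, each $b\in\Omega(a^{[0]})$ thus has, in every finite window, a finite nested ``itinerary'' of copies. Second, within each inserted copy the deeper insertions all lie on the positive side of that copy's centre (they sit near the positive indices $\alpha_m$), so reading $b$ from right to left one only ever exits copies into their purely periodic left halves. Consequently every $b\in\Omega(a^{[0]})$ is eventually periodic as $k\to-\infty$, with period of the form $\ell+1$, while its behaviour as $k\to+\infty$ consists of a finite transition followed by a tail drawn from the countable list made up of the periodic sequences $(1\,0^{i})^{\infty}$ ($i\ge 0$), the self-similar sequences $a^{[m]}$ ($m\ge 1$), and the constant sequence $0^{\infty}$. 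Enumerating these finitely-described possibilities shows that $\Omega(a^{[0]})$ meets only countably many orbits, which together with the lower bound above gives exactly $\aleph_0$ asymptotic cones up to homeomorphism.

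I expect the genuine difficulty to lie entirely in this last step: verifying rigorously that no uncountable family of limit sequences is hidden inside $\Omega(a^{[0]})$. The danger is a ``diagonal'' limit interleaving infinitely many blocks of different periods in an unconstrained pattern, which would produce a continuum of cones exactly as in Proposition~\ref{uncountcones}. Ruling this out is precisely where thinness (forcing finitely many active levels in each bounded window) and the one-sidedness of the insertions (forcing left-eventual-periodicity) must be combined; making the resulting induction on levels precise, in effect controlling the Cantor--Bendixson structure of $\Omega(a^{[0]})$, is the technical heart of the argument.
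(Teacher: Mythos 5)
Your overall strategy is the same as the paper's: build $X$ from periodic profile sequences of distinct asymptotic densities (your $(1\,0^i)^\infty$ versus the paper's ``$1$ at multiples of $i+2$'' --- essentially identical), nest them along a thin set as in Theorem~\ref{infmany} to get the non-homeomorphic iterated cones, and then try to classify all ultralimits of shifts of $(a^{[0]}_k)$ up to shift. The first bullet of the theorem is handled correctly. But the second bullet --- the actual content of the theorem --- is exactly the step you defer, and your own closing paragraph concedes that ruling out a ``diagonal'' limit interleaving infinitely many periods is ``the technical heart of the argument.'' As written, the proposal therefore has a genuine gap at the decisive point, and the classification you assert (left-eventually-periodic, right tail ``drawn from a countable list'') is stated but not derived from the construction.

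The concrete missing ingredient is a separation condition on the nested variable parts, which the paper supplies and your construction lacks. With variable parts $[\alpha_n-n,\alpha_n+n]$ as in Theorem~\ref{infmany}, the right edge of a level-$m$ variable part sitting inside a level-$n$ one lands at $\alpha_n+\alpha_m+m$, while the outer edge is at $\alpha_n+n$; since $n$ ranges over all of $\IN$, the gap $n-\alpha_m-m$ between consecutive nested boundaries can be prescribed arbitrarily, and this cascades through the levels. Thinness of $A$ alone does not prevent the boundaries of arbitrarily many nesting levels from accumulating within a single bounded window, which is precisely the diagonal scenario you flag. The paper avoids this by enlarging the variable parts to $[\alpha_n-k(n),\alpha_n+k(n)]$ subject to the condition that any nested variable part meeting $[\alpha_n-k(n),\alpha_n+k(n)]$ is contained in $[\alpha_n-k(n)+n,\alpha_n+k(n)-n]$, i.e.\ lies at distance at least $n$ from its parent's boundary. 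Defining $t(n)$ to be the nesting depth of $n$, this forces the window of radius $t(n)$ around $n$ to be a concatenation of pieces of just the two consecutive profiles $a^{(t(n)-1)}$ and $a^{(t(n))}$. The classification then splits on $\mu$-$\lim t$: if it is infinite, sparseness of the deep profiles gives a limit sequence with at most two $1$'s; if it equals $N<\infty$, the limit is a splice of the two periodic profiles $a^{(N-1)}$ and $a^{(N)}$, determined up to shift by $N$ and a relative phase. Together with the one-line observation (which your write-up also omits) that basepoints escaping to infinity relative to $\alpha$ yield only finitely many homeomorphism types, this gives the countable bound. To complete your proof you would need to add this separation condition (or an equivalent device) and carry out the two-case analysis; without it the countability claim is not established.
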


\begin{proof}
 Notice that there are finitely many homeomorphism classes of asymptotic cones of bullseye spaces with base-point $(x_n)$ and scaling factor $\alpha_n$ such that $\mu-\lim d(x_n,e)/\alpha_n=+\infty$. So, we only need to consider asymptotic cones with basepoint $e$. In particular, consider for each natural number $i$ the sequence $(a^{(i)}_k)$ such that $a^{(i)}_k=1$ if $k$ is divisible by $i+2$ and $a^{(i)}_k=0$ otherwise. Those sequences clearly have pairwise different asymptotic density. Also, all the asymptotic cones of a bullseye space $Y$ associated to any of those sequence (with basepoint $e$) is homeomorphic to $Y$.
\par
Up to considering a ``sparser'' set than $A$, we can replace the intervals $[\alpha_n-n,\alpha_n+n]$ with intervals $[\alpha_n-k(n),\alpha_n+k(n)]$ for a suitable sequence $k(n)\geq n$ in such a way that for each $n,m$ we have:
 $$\alpha_m-k(m)\leq k(n)\Rightarrow \alpha_{m}+k(m)\leq k(n)-n,$$
 $$\alpha_m+k(m)\geq k(n)\Rightarrow \alpha_{m}-k(m)\geq k(n)+n. $$
In fact, we can choose inductively $\alpha_j, k(j)$ with $k_j\ll \alpha_j$ but $k_j,\alpha_j-k_j\gg \alpha_{j-1}+k(j-1)$.
The condition gives that if $I_{n,m}=\alpha_n+[\alpha_m-k(m),\alpha_m+k(m)]$ intersects $[\alpha_n-k(n),\alpha_n+k(n)]$, then $I_{n,m}\subseteq [\alpha_n-k(n)+n,\alpha_n+k(n)-n]$.
Roughly speaking, this ensures that we first modify $(a^{(0)}_k)$ in certain intervals, then in certain other intervals ``well-inside'' the given ones and so on.
\par
Consider now the bullseye space $X$ with associated sequence $(a^{[0]}_k)$. We claim that each asymptotic cone of $X$ (with basepoint $e$) is a bullseye whose associated sequence is either
\begin{enumerate}
 \item a sequence containing at most two $1$, or
 \item a sequence obtained concatenating an initial subsequence of $(\alpha^{(i)}_k)$ and a (possibly empty) final subsequence of $(\alpha^{(i+1)}_k)$, for some $i$.
\end{enumerate}

This clearly implies that $X$ is as required.
\par
In order to show this associate to each $n$ the maximal number $t(n)$ such that there exist $m_0,\dots,m_{t(n)}$ such that $|n-\alpha_{m_0}|\leq k(m_0), |n-\alpha_{m_0}-\alpha_{m_1}|\leq k(m_1),\dots$ (in a not precise but more evocative way: the number of variable parts that $n$ belongs to). Notice that the finite sequence $(a^{[0]}_{k})|_{J_n}$, where $J_n$ is the interval around $n$ of radius $t(n)$ can be obtained concatenating restrictions of $(a^{(t(n)-1)}_{k})$ and $(a^{(t(n))}_{k})$ to suitable finite intervals.
\par
Let $\alpha=(\alpha_n)$ be any scaling factor. There are two cases to consider.
\par
Suppose that $\mu-\lim t(\lfloor\log_2(\alpha_n)\rfloor)=+\infty$. Notice that $(a^{[0]}_{k})$ takes value $0$ at most twice in an interval of radius $t(n)$ around $n$. In particular, the asymptotic cone of $X$ with scaling factor $\alpha$ will be as in case $(1)$ above.
\par
Suppose now that $\mu-\lim t(\lfloor\log_2(\alpha_n)\rfloor)=N<+\infty$. In this case the description of $(a^{[0]}_{k})|_{J_n}$ given above guarantees that the asymptotic cone of $X$ with scaling factor $\alpha$ is as in case $(2)$.

\end{proof}


\begin{thebibliography}{}
\bibitem[B]{B}B. H. Bowditch, {\it Continuously many quasiisometry classes of 2-generator groups.} Comment. Math. Helv. {\bf 73} (1998), 232--236.
\bibitem[D]{D} C. Dru\c{t}u, {\it Quasi-isometry invariants and asymptotic cones.} Int. J. Alg. Comp. {\bf 12} (2002), 99--135.
\bibitem[dC]{dC}Y. de Cornulier {\it Dimension of asymptotic cones of Lie groups.} J. Topol. {\bf 1} (2008), no. 2, 342–361.
\bibitem[DP]{DP}A. Dyubina, I. Polterovich, {\it Explicit constructions of universal $\IR$-trees and asymptotic geometry of hyperbolic spaces.} Bull. London Math. Soc. {\bf 33} (2001), 727--734.
\bibitem[DS]{DS}C. Dru\c{t}u, M. Sapir, {\it Tree-graded spaces and asymptotic cones of groups.} Topology {\bf 44} (2005), 959--1058.
\bibitem[G]{G1}M. Gromov, {\it Groups of polynomial growth and expanding maps.} Publications Math{\'e}matiques de l'IH{\'E}S {\bf 53} (1981), 53--78.
\bibitem[KSTT]{KSTT}L. Kramer, S. Shelah, K. Tent, S. Thomas, {\it Asymptotic cones of finitely presented groups.} Advances in Mathematics {\bf 193} (2005), 142--173.
\bibitem[OlS]{OS}A. Y. Ol'shanskii, M. V. Sapir, {\it A finitely presented group with two non-homeomorphic asymptotic cones.} Int. J. Alg. Comp. {\bf 17} (2007), no. 2, 421--426.
\bibitem[OsS]{OsS}D. V.Osin, M. V. Osin, {\it Universal tree-graded spaces.} To appear in Int. J. Alg. Comp.
\bibitem[P]{P} P. Pansu, {\it Croissance des boules et des g\'eod\'esiques ferm\'ees dans les nilvari\'et\'es.} Ergodic Theory Dynam. Systems {\bf 3} (1983), no. 3, 415–445.
\bibitem[Sc1]{Sc}L. Scheele, {\it Slow ultrafilters and asymptotic cones of proper metric spaces}\\ arXiv:1010.1699v1 (2010).
\bibitem[Sc2]{Scth}L. Scheele, {\it Iterated asymptotic cones.} Phd Thesis, Universit\"at M\"unster (2011).
\bibitem[Si1]{Si1}A. Sisto, {\it Separable and tree-like asymptotic cones of groups.}\\ arXiv:1010.1199v2 (2010).
\bibitem[Si2]{Si2}A. Sisto, {\it Projections and relative hyperbolicity.}\\ arXiv:1010.4552v3 (2011).
\bibitem[TV]{TV}S. Thomas, B. Velickovic, {\it Asymptotic cones of finitely generated groups.} Bull. London Math. Soc. {\bf 32} (2000), 203--208.
\bibitem[vDW]{VDW}L. van den Dries, A. J. Wilkie, {\it On Gromov's theorem concerning groups of polynomial growth and elementary logic.} J. of Algebra {\bf 89} (1984), 349--374.
\end{thebibliography}
\end{document}